\newtheorem{theorem}{Theorem}[section]
\theoremstyle{definition}
\newtheorem{proposition}[theorem]{Proposition}
\newtheorem{lemma}[theorem]{Lemma}
\newtheorem{definition}[theorem]{Definition}
\newtheorem{remark}[theorem]{Remark}
\newtheorem{corollary}[theorem]{Corollary}
\newtheorem{example}[theorem]{Example}
\def\calB{\mathcal{B}}
\def\calS{\mathcal S}
\def\calF{{\mathcal F}}
\def\calV{{\mathcal V}}
\def\calM{{\mathcal M}}
\def\bbR{{\mathbb R}}
\def\cal{\mathcal}
\def\-{\setminus}
\def\red#1{\textcolor[rgb]{1.00,0.00,0.00}{#1}}%
\begin{document}


\title[Construction of the graphic matroid from the lattice of integer flows]{
A construction of the graphic matroid from the lattice of integer flows}

\author{Zsuzsanna Dancso}
\address{Australian National University \\
John Dedman Building \\
Union Lane, Acton, ACT 2602 Australia \newline
         {\tt \url{http://www.math.toronto.edu/~zsuzsi }}}
\email{zsuzsanna.dancso@anu.edu.au}
\author{Stavros Garoufalidis}
\address{School of Mathematics \\
         Georgia Institute of Technology \\
         Atlanta, GA 30332-0160, USA \newline 
         {\tt \url{http://www.math.gatech.edu/~stavros }}}
\email{stavros@math.gatech.edu}
\thanks{
{\em Key words and phrases: graphs, lattice of flows, lattices, Voronoi cells.
}
}

\date{November 9, 2016}


\begin{abstract}
The lattice of integer flows of a graph is known to determine the graph up 
to 2-isomorphism (work of Su--Wagner and Caporaso--Viviani). In this paper 
we give an algorithmic construction of the graphic matroid $\calM(G)$ of a 
graph $G$, given its lattice of integer flows $\calF(G)$. The algorithm can 
then be applied to compute any other 2-isomorphism invariants (that is, 
matroid invariants) of $G$ from $\calF(G)$. Our method is based on a result 
of Amini which describes the relationship between the geometry of
the Voronoi cell of $\calF(G)$ and the structure of $G$. 
\end{abstract}

\maketitle

{\footnotesize
\tableofcontents
}

\section{Introduction}
\label{sec.intro}

The goal of this paper is to reconstruct, to the extent possible, a 
two-edge-connected graph $G$ from its lattice of integer flows $\calF(G)$. 
The lattice $\calF(G)$ does not contain all the information about the graph: 
it is invariant under graph {\em 2-isomorphisms}, but it separates 
2-isomorphism classes of graphs 
(a result by Caporaso--Viviani 
\cite[Thm 3.1.1]{Caporaso}, Su--Wagner \cite{SuWagner}, implicitly Mumford 
in \cite{TadaoSeshadri}, and Artamkin \cite{Art} for 3-connected graphs.) An equivalent way to state this is that two 
graphs have isomorphic lattices of integer flows if and only if 
they have the isomorphic graphic matroids. Existing proofs of this fact are 
not constructive: they don't reconstruct the graphic matroid from the 
lattice of integer flows, but prove that the latter determines the former 
up to isomorphism. In this paper we give an algorithm 
which takes the Gram matrix of $\calF(G)$ as input and produces the graphic 
matroid $\calM(G)$ as output. This, together with existing algorithms
that reconstruct a representative the 2-isomorphism class of $G$ from 
$\calM(G)$~\cite{Fuji, BixbyWagner} fulfills the goal of the paper.

We note that in \cite{SuWagner} Su and Wagner prove the more general 
statement that the lattice of integer flows of a regular matroid determines 
the matroid up to co-loops. (Note that a graph is 2-edge-connected if and 
only if its graphic matroid has no co-loops.) Their proof is partially 
constructive, but requires the input of a
{\em ``fundamental basis of $\calF(M)$ coordinatized by a basis of 
$M$''}. In graphic matroids, this translates to 
a {\em spanning-tree basis of $\calF(G)$} (as defined in Section 
\ref{sec.prelim}). Although we don't build on \cite{SuWagner}, in 
Remark~\ref{rmk.SpanTree} we show how one can choose a spanning-tree basis, 
making the \cite{SuWagner} result fully constructive for graphic matroids.

The concept of 2-isomorphism is not only important in graph theory, but has 
deep connections to other areas of mathematics. For example, it is the 
graph-theoretic equivalent of the Conway mutation of links in low-dimensional 
topology, and this fact has strong applications in knot theory \cite{Greene}. 
In fact, in \cite{Greene} Greene proves that even the {\em $d$-invariant} of 
the lattice $\calF(G)$ is a complete invariant of 2-edge-connected graphs 
up to 2-isomorphism~\cite[Thm 1.3]{Greene}.

The construction of the graphic matroid $\calM(G)$ of $G$ 
from $\calF(G)$ is stated in Theorems~\ref{thm.3conn} and 
\ref{thm.2conn} for 3-connected and 2-connected graphs respectively. 
Our method uses the relationship between the cycle structure of $G$ and the 
geometry of the Voronoi cell $V(G)$ of $\calF(G)$, and relies on  
on Amini's Theorem \cite{Amini} which asserts that the poset of faces of $V(G)$
is isomorphic to the poset of {\em stronly connected orientations} of $G$. 

Since $\calF(G)$ is a complete 2-isomorphism invariant of 2-edge-connected 
graphs $G$, any
other 2-isomorphism invariant (that is, any graph invariant that is 
preserved by 2-isomorphisms) 
is determined by $\calF(G)$. All 2-isomorphism invariants are (often openly, 
sometimes secretly)
matroid invariants, and hence can be computed from the graphic matroid. 
Hence, this paper provides an algorithmic way of computing any of these 
invariants from $\calF(G)$: examples include the number of edges or vertices, 
the Tutte polynomial, the Jones polynomial of the corresponding alternating 
link, and Wagner's algebra of flows \cite{Wagner:FlowsOnGraphs}. 
In addition, as we mentioned earlier, a representative for the 2-isomorphism 
class of $G$ can be found by an application of known graph realization 
algorithms \cite{Fuji, BixbyWagner}. These are quite fast: \cite{BixbyWagner} 
is almost linear time in the number of non-zero entries of the matrix 
representing $\calM(G)$.


\section{Definitions and classical results}
\label{sec.prelim}

Let $G$ be a finite, connected graph with vertex set $V(G)$ and edge set $E(G)$.
Throughout this paper multiple edges and loops are allowed. We use the word 
{\em subgraph} to mean a {\em spanning subgraph} of $G$, that is, a graph $H$ 
such that $V(H) = V(G)$ and $E(H) \subset E(G)$ (we allow the
possibility $E(H)=E(G)$). In other words, a subgraph is
determined by the choice of a subset of the edge set.

We say that $G$ is {\em 2-edge-connected} if it remains connected 
after removing any one edge.
In this paper we will short this to {\em 2-connected}.
Note that elsewhere in the literature the shorthand 2-connected usually means 
{\em 2-vertex-connected}, that is, a graph that remains connected upon
deleting any one vertex along with its incident edges. Here we only 
talk about edge connectedness
unless otherwise stated, hence our choice of terminology.

If $G$ is not 2-connected, then it has an edge $e$ such that the graph 
resulting from deleting $e$, 
$G-e$, is no longer connected. We call such an edge $e$ a {\em bridge}. 
In other words, a graph is 2-connected if and only if it
does not contain a bridge. It is also easy to see that a graph $G$ is 
2-connected if and only if each
edge of $G$ participates in a {\em circuit}. A circuit is a cycle 
(closed walk) in $G$ which does not
go through the same edge or vertex twice. 

Similarly, a connected graph $G$ is called {\em 3-connected} if it remains 
connected upon removing any pair of 
edges. A pair of edges $\{e,f\}$ such that $G-\{e,f\}$ is no longer 
connected is called a {\em 2-cut}.
A graph $G$ is 3-connected if and only if it does not contain a 2-cut.

Let $G$ be a 2-connected graph with edge set $E(G)$, and choose an 
{\em orientation} $\omega$ for $G$: that is, a direction for each edge of $G$. 
This makes $G$ into a one-dimensional CW-complex, and 
{\em integer flows} on $G$ are the elements of its first 
homology, denoted $\calF(G)$.
In other words, for each vertex $v$ in the vertex set $V(G)$, let 
$E_v^+$ and $E_v^-$ denote the set of incoming and outgoing edges at $v$, 
respectively. An element $\sum_{e\in E(G)} a_e e$ of $\mathbb Z^{E(G)}$ 
(the free abelian group generated by $E(G)$) is an 
integer flow if and only if $\sum_{e\in E_v^+} a_e -\sum_{e \in E_v^-} a_e=0$ for 
every $v \in V(G)$. Elements of $\calF(G)$ are called the {\em cycles} of $G$.
Note that $\calF(G)$ is a free abelian group; its rank is called the 
{\em genus} of $G$.

A {\em lattice} is a free abelian group with a non-degenerate inner product.
The set of integer flows $\calF(G)$ admits a lattice structure thanks to the 
inner product induced by the Euclidean inner product on $\mathbb Z^ {E(G)}$,
given by $(e,f)=\delta_{ef}$ for $e, f \in E(G)$.
Note that the isomorphism class of the lattice $\calF(G)$ does not depend on 
the choice of the orientation $\omega$. 
 
An {\em oriented circuit} of $G$ is a closed walk in the (unoriented) graph $G$ 
which does not return to the same vertex or edge twice.
Equivalently, an oriented circuit is a minimal 2-regular subgraph 
of $G$ with a chosen cyclic orientation. 
Each oriented circuit $C$ of $G$ gives rise to an element $C \in \calF(G)$ 
(by a slight abuse of notation),
where the coefficient of $e\in E(G)$ is 0 for edges not in $C$, 1 for those 
edges of $C$ whose orientations are respected, and $-1$ on the edges 
of $C$ which are used ``backwards'' in $C$. The reverse circuit 
induces the element $-C \in \calF(G)$. These are called {\em circuit elements}
of $\calF(G)$

A basis of $\cal F(G)$ is called a {\em circuit basis} if it consists of 
circuit elements only. Choosing a
spanning tree $T$ for $G$ determines a circuit basis for $\calF(G)$, 
consisting of the {\em fundamental circuits} of each
edge of $G$ not in $T$. Namely, each edge $e \notin T$ induces a unique 
circuit consisting of $e$ and the unique path in $T$ from the end of $e$
to its beginning: this is the fundamental circuit of $e$. Such a circuit
basis is called a {\em spanning tree basis}. Note that there exist circuit 
bases that are not spanning tree bases.

Two graphs are {\em isomorphic} if there is an edge-preserving bijection
of their sets of vertices. Two graphs are {\em 2-isomorphic} if there is a 
cycle preserving bijection of their sets of edges.

A connected graph is {\em 3-vertex-connected} if it remains connected 
upon removing any three
vertices. In \cite{Whitney} Whitney showed
that two 3-vertex-connected graphs are 2-isomorphic if and only if they are 
isomorphic. Furthermore, Whitney gave a set of local moves that generate the
2-isomorphism of 2-connected graphs. For a detailed discussion see for 
example \cite{Greene}.

If two graphs are 2-isomorphic, then it is easy to see that their
lattices of integer flows are isomorphic. If $G$ and $G'$
graphs with orientations $\omega$ and $\omega'$, respectively. 
Then a 2-isomorphism $\bar{\varphi}: E(G) \to E(G')$ can be lifted to an 
isomorphism $\tilde{\varphi}: \mathbb Z^{E(G)} \to \mathbb Z^{E(G')}$, for 
which $\varphi(e) = \pm \bar{\varphi}(e)$, and which
restricts to an isomorphism $\varphi: \calF(G)\to \calF(G')$. To see this, 
write the two-isomorphism as a series of Whitney-moves, this way $\omega$ 
induces an orientation on $G'$. Choose the sign of $\bar{\varphi}(e)$ to
be positive when this induced orientation agrees with $\omega'$ and negative 
otherwise. Conversely, by the results of \cite{Caporaso} and \cite{SuWagner}, 
if two graphs have isomorphic lattices of integer flows, then they are 
2-isomorphic. In fact, an even stronger statement is true, which follows 
from the proof of \cite[Thm.3.8]{Greene}.

\begin{theorem}[{\cite{Greene}}]
\label{thm.InducedIso}
Let $G$ and $G'$ be graphs with orientations $\omega$ and $\omega'$, 
respectively. Let $\varphi: \calF(G) \to \calF(G')$ be an isomorphism of 
their lattices of integer flows. Then $\varphi$ can be extended to an 
isomorphism $\tilde{\varphi}: \mathbb Z^{E(G)} \to \mathbb Z^{E(G')}$,
satisfying that $\tilde{\varphi}|_{\calF(G)}=\varphi$ and for every 
$e \in E(G)$, $\tilde{\varphi}(e)=\pm e'$ for some $e'\in E(G')$. In other 
words, $\tilde{\varphi}$ is a lift of a 2-isomorphism.
\end{theorem}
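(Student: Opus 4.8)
The plan is to leverage the structure theory already assembled: namely, that a lattice isomorphism $\varphi \colon \calF(G) \to \calF(G')$ forces a combinatorial correspondence between the graphs, and then to upgrade this correspondence from the flow lattices to the ambient spaces $\BZ^{E(G)}$ and $\BZ^{E(G')}$ by making explicit choices of signs. The starting point is the already-cited fact (Caporaso--Viviani, Su--Wagner) that $\varphi$ guarantees $G$ and $G'$ are $2$-isomorphic; let $\bar\varphi \colon E(G) \to E(G')$ be such a $2$-isomorphism. The key geometric input is that $\varphi$ must carry \emph{circuit elements to circuit elements}: a circuit element $C \in \calF(G)$ is characterized intrinsically among all flows (for instance, as a primitive vector of minimal norm in its direction whose support, as a sublattice-theoretic datum, is minimal, or—following Greene's argument—via the $d$-invariant / short-vector structure of the lattice). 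So the first step I would carry out is to isolate a lattice-theoretic characterization of circuit elements, so that $\varphi$ and $\varphi^{-1}$ permute them, and more precisely so that the supports match up under $\bar\varphi$.

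Next, I would fix orientations and bootstrap the sign data. Write $\bar\varphi$ as a composition of Whitney moves (vertex splits/identifications and twists), exactly as in the paragraph preceding Theorem~\ref{thm.InducedIso}; these moves transport the orientation $\omega$ of $G$ to an orientation of $G'$, and we declare $\tilde\varphi(e) = +e'$ when this transported orientation agrees with $\omega'$ on $e' = \bar\varphi(e)$ and $\tilde\varphi(e) = -e'$ otherwise. This defines a candidate lift $\tilde\varphi \colon \BZ^{E(G)} \to \BZ^{E(G')}$, which is manifestly an isomorphism of free abelian groups sending each generator to $\pm$ a generator. The substantive claim is that $\tilde\varphi|_{\calF(G)} = \varphi$, \emph{not merely} that $\tilde\varphi|_{\calF(G)}$ is \emph{some} isomorphism $\calF(G)\to\calF(G')$. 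The Whitney-move construction only guarantees the latter; so I would argue as follows: both $\varphi$ and $\tilde\varphi|_{\calF(G)}$ send circuit elements to circuit elements, and by the support-matching from the first step they agree up to sign on every circuit element. Then a rigidity argument—if two lattice isomorphisms agree up to sign on a spanning set of circuit elements (a circuit basis exists, e.g.\ a spanning-tree basis) \emph{and} the sign pattern is consistent on overlapping circuits sharing an edge—forces them to be equal.

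The main obstacle I anticipate is precisely this sign-coherence step: a priori $\varphi$ could differ from $\tilde\varphi$ by an automorphism of $\calF(G')$ that is not induced by reorienting edges, and one must rule this out. Here I would invoke Greene's Theorem~3.8 and the accompanying analysis of the lattice's automorphism group: the point is that every automorphism of $\calF(G)$ is realized, up to an overall reorientation, by a graph automorphism composed with Whitney moves, so the discrepancy between $\varphi$ and the naive lift is itself liftable and can be absorbed by adjusting the orientation transported across the Whitney moves. Concretely, I would: (i) use Greene to write $\varphi = \tilde\psi|_{\calF(G)}$ for \emph{some} lift $\tilde\psi$ of \emph{some} $2$-isomorphism; (ii) observe that $\tilde\psi$ automatically satisfies $\tilde\psi(e) = \pm e'$; (iii) take $\tilde\varphi := \tilde\psi$. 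In this formulation the theorem is essentially a restatement of the cited part of Greene's proof, and the only work is to check that Greene's argument indeed produces a lift defined on all of $\BZ^{E(G)}$ (not just on $\calF(G)$) with the edge-to-$\pm$-edge property; this is where I would spend the bulk of the write-up, tracing through how short vectors in $\calF(G)$—circuit elements and, in the $2$-connected but not $3$-connected case, their sums across $2$-cuts—pin down the ambient coordinates.
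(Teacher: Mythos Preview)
The paper does not supply a proof of this theorem: it is stated as a cited result, attributed to Greene and introduced as something that ``follows from the proof of \cite[Thm.3.8]{Greene}''. There is therefore no in-paper argument to compare your proposal against.

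As a sketch of how one might extract the statement from Greene's work, your outline is broadly reasonable but somewhat circular: in step~(i) you ultimately write ``use Greene to write $\varphi = \tilde\psi|_{\calF(G)}$ for some lift $\tilde\psi$ of some $2$-isomorphism,'' which is precisely the content of the theorem you are trying to prove. The earlier portion of your proposal---building a lift from an arbitrary $2$-isomorphism via Whitney moves and then attempting to correct it---correctly identifies the real difficulty (sign coherence, i.e.\ showing that the discrepancy between $\varphi$ and the na\"ive lift is itself induced by edge reorientations), but you do not resolve it independently; you defer to Greene at exactly that point. If you want a self-contained argument, the substance lies in proving that every lattice automorphism of $\calF(G)$ is induced by a signed edge permutation, and this is the part of Greene's proof of Theorem~3.8 that the present paper is invoking rather than reproducing.
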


An equivalent definition of 2-isomorphism is to say that a graph 
2-isomorphism is an isomorphism of the corresponding graphic matroids. 
A {\em matroid} is a common generalization of the notion of linear 
independence and graphs up to 2-isomorphism. Matroids have several 
equivalent definitions, for a detailed introduction see \cite{Oxley}.
One definition of a matroid is as a pair consisting of a {\em ground set} 
and a set of subsets which are called the {\em independent sets}. The set 
of independent sets is subject to the expected axioms: the empty set is 
independent, subsets of independent sets are independent, and the 
``exchange property''. Alternatively, one can declare the set of 
{\em dependent subsets} of the ground set, subject to different axioms. 
Yet another equivalent definition is to specify
the set of minimal dependent 
subsets or {\em circuits}: those whose proper subsets are all independent.

In this paper it will be most convenient to work with the circuit definition. 
That is, the graphic matroid $\calM(G)$ is given by the sets of edges and 
circuits of $G$, along with the data of which edges participate in any 
given circuit. In other words, $\calM(G)$ is a $(0,1)$-matrix whose columns 
are indexed by $E(G)$ and whose rows are indexed by the circuits of $G$. 
Having an entry 1 in position $(e,C)$ means that the edge $e$ participates 
in the circuit $C$.

The first step to reconstructing $\calM(G)$ is to choose a circuit basis 
for $\calF(G)$. To do this we have to be able to recognize the circuit 
elements in $\calF(G)$ without knowledge of $G$. Note that once this 
is done, we at least know how many edges participate in each circuit,
as the length of a circuit $C$ is then given by the norm $(C,C)$ in 
$\calF(G)$. Also, since $G$ is 2-connected, all edges of $G$ participate in 
some circuit, and hence ``appear'' in some element of the circuit basis. 

One detects circuit elements of $\calF(G)$ by studying the 
{\em Voronoi cell} $V(G)$ of the lattice, according to a theorem of 
Bacher, de la Harpe, and Nagnibeda \cite{BDN:CutsAndFlows}. The Voronoi cell 
is the collection
of points in $\calF(G)\otimes \bbR$ which are closer to the origin than to 
any of the other lattice points. This is a polytope, whose codimension one
faces are supported by the perpendicular bisector hyperplanes of some 
lattice vectors. As it turns out, the oriented circuits of $G$ are in 
bijection with the codimension one faces of $V(G)$.

\begin{proposition}[{\cite[Prop.6]{BDN:CutsAndFlows}}]
\label{prop.Voronoi}
The following are equivalent:
\begin{itemize}
\item The lattice vector $v\in \calF(G)$ is a circuit element.
 \item The perpendicular bisector of $v$, that is, the hyperplane given by 
$(x,v)=\frac{(v,v)}{2}$, supports a codimension one face of the Voronoi cell.
 \item The vector $v$ is a {\em strict Voronoi vector}, that is, $\pm v$ are 
the two unique shortest elements of the coset $v+2\calF(G)$.
\end{itemize}
\end{proposition}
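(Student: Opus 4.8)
The plan is to establish the two equivalences $(1)\Leftrightarrow(3)$ and $(2)\Leftrightarrow(3)$ separately. The first is where the combinatorics of $G$ enters, and it relies on the conformal decomposition of integer flows into oriented circuits together with a parity argument; the second is the classical Voronoi criterion identifying the facet-defining (``relevant'') vectors of an arbitrary lattice, which one can either cite from the lattice-theory literature or reprove in a few lines. Throughout, $v\ne 0$.

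First I would prove $(1)\Rightarrow(3)$. Let $v=C$ be a circuit element, so that $(C,C)$ equals the number of edges of $C$. Any $w\in C+2\calF(G)$ satisfies $w_e\equiv C_e\pmod 2$ for every edge $e$, so $w_e$ is odd (hence $w_e^2\ge 1$) on the edges of $C$ and even (hence $w_e^2\ge 0$) elsewhere; summing gives $(w,w)\ge (C,C)$. Equality forces $w_e=\pm1$ on the edge set of $C$ and $w_e=0$ off it, so $w$ is a $\pm1$-valued flow supported on a single circuit, and the integer flows so supported form a rank-one group generated by $C$; hence $w=\pm C$. Thus $\pm C$ are the two unique shortest vectors of $C+2\calF(G)$.

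Next, $(3)\Rightarrow(1)$ by contraposition. If $v$ is not a circuit element, orient each edge of $\operatorname{supp}(v)$ in the direction of the flow; since $v$ is a flow, every vertex of the resulting subgraph has positive in- and out-degree, so the subgraph contains a directed cycle, and hence an oriented circuit $C$ with $C_ev_e>0$ on $\operatorname{supp}(C)$. Set $w:=v-2C\in v+2\calF(G)$. A short expansion using conformality gives $(w,w)-(v,v)=4\sum_{e\in\operatorname{supp}(C)}(1-|v_e|)\le 0$; moreover $w=v$ is impossible and $w=-v$ would force $v=C$, which is excluded, so $w\ne\pm v$. Hence either $v$ is not shortest in its coset, or $w$ is a second shortest vector, and in either case $(3)$ fails. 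The only delicate point here is the flow-decomposition step and the sign bookkeeping that makes the cross term behave as claimed.

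Finally, $(2)\Leftrightarrow(3)$. The key observations are: $v/2\in V(G)$ if and only if $\|v/2-\mu\|\ge\|v/2\|$ for all $\mu\in\calF(G)$, i.e.\ if and only if $\pm v$ are shortest in $v+2\calF(G)$; the defining inequality of $V(G)$ indexed by $\mu$ is tight at $v/2$ exactly when $\|v-2\mu\|=\|v\|$; and the point reflection $x\mapsto v-x$ is an isometric symmetry of the lattice that swaps $V(G)$ with $V(G)+v$, so the common face $V(G)\cap\{(x,v)=\tfrac12(v,v)\}$ is centrally symmetric about $v/2$ and therefore contains $v/2$ in its relative interior. Combining these, that common face is a facet precisely when $\mu=0$ and $\mu=v$ are the only $\mu$ with $\|v-2\mu\|=\|v\|$, which is the uniqueness clause in $(3)$. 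I expect this last geometric step --- checking that no other bisector hyperplane is tight at $v/2$, so that the face has codimension exactly one --- to be the main obstacle if one wants a fully self-contained argument; the combinatorial equivalence $(1)\Leftrightarrow(3)$ is comparatively routine.
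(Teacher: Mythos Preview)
The paper does not give its own proof of this proposition: it is quoted verbatim as \cite[Prop.~6]{BDN:CutsAndFlows} and used as a black box, so there is nothing in the paper to compare your argument against.

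Your proof is correct and is essentially the standard one. The equivalence $(2)\Leftrightarrow(3)$ is Voronoi's classical characterization of the relevant (facet-defining) vectors, valid for any lattice; your sketch via the central symmetry $x\mapsto v-x$ and the observation that additional tight bisectors at $v/2$ correspond to additional shortest coset representatives is the right picture, and the ``main obstacle'' you flag is exactly the content of Voronoi's theorem --- in a write-up it is cleanest simply to cite it. For the graph-specific equivalence $(1)\Leftrightarrow(3)$, your parity/support argument for $(1)\Rightarrow(3)$ and the conformal-circuit subtraction $w=v-2C$ for the converse are precisely the ingredients used in \cite{BDN:CutsAndFlows}; the computation $(w,w)-(v,v)=4\sum_{e\in\operatorname{supp}(C)}(1-|v_e|)$ and the case split on whether this is strictly negative or zero are both correct. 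One small wording point: in your $(3)\Rightarrow(1)$ step, ``every vertex of the resulting subgraph has positive in- and out-degree'' should read ``every vertex of positive degree in the support has positive in- and out-degree'', but this is exactly what is needed to extract a directed cycle.
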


\begin{example}
\label{ex.TwoCycles}
As an example consider the graph $H$ shown in Figure \ref{fig.ExampleGraph}. 
The genus of $H$ (rank of $\calF(H)$) is 2, and a basis for $\calF(H)$ 
consists of the circuits $C_1$ and $C_2$. Their inner products are given by 
$(C_1,C_1)=(C_2,C_2)=3$ and $(C_1,C_2)=-1$.
Hence, the lattice of integer flows can be represented isometrically in the 
Euclidean plane by vectors $C_1=(3,0)$ and $C_2=(-1/3, 2\sqrt{20}/3)$, as
shown in Figure \ref{fig.ExampleGraph}. 
The Voronoi cell is a hexagon supported by the perpendicular bisectors of 
$\pm C_1$, $\pm C_2$ and $\pm (C_1+C_2)$, 
which are exactly the circuits of $H$.
\end{example}

\begin{figure}[!htpb]
\raisebox{3mm}{\input 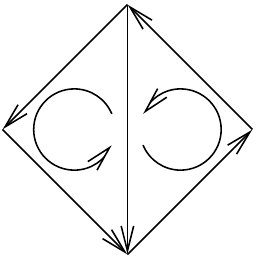_t} \hspace{2cm} 
\input 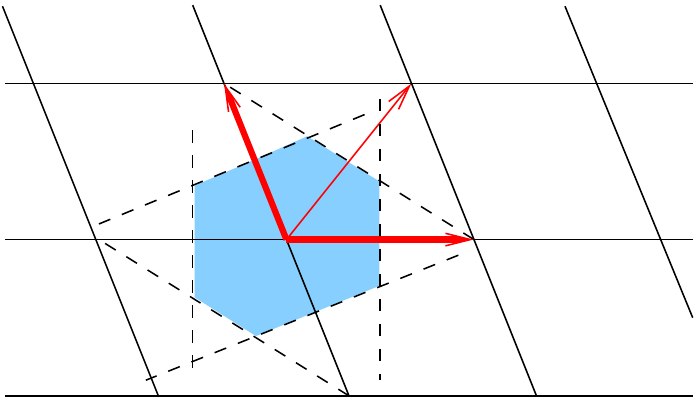_t
\caption{The graph $H$ and the Voronoi cell of its lattice of integer flows.}
\label{fig.ExampleGraph}
\end{figure}


\section{Reconstructing $\calM(G)$ from $\calF(G)$}
\label{sec:main}

\subsection{General discussion}

Choose a circuit basis for $\calF(G)$; the length of each circuit $C$ in 
the basis is the norm $(C,C)$ in $\calF(G)$. In order to reconstruct 
$\calM(G)$, we need more information about the edges on $G$. 
We recover this information using a result by Amini \cite{Amini}
which states that the poset of faces of the Voronoi cell of $\calF(G)$ 
(relative to inclusion) is isomorphic to the 
{\em poset of strongly connected orientations} of subgraphs of $G$. In fact,
much of this section amounts to developing an explicit understanding of 
Amini's result.

We start by recalling the definition of strongly connected orientations. 
Let $\omega$ be an orientation of a connected graph $G$. Then $\omega$ is 
called {\it strongly connected}, or a {\it strong orientation}, if for any
pair of vertices $u$ and $v$ there is an oriented path from $u$ to $v$ as 
well as an oriented path from $v$ to $u$. That is, any two vertices $u$ and 
$v$ participate in an oriented cycle which does not repeat edges. Note that 
in particular this implies 
that $G$ is 2-connected. If $G$ is a not necessarily connected graph, we
say that an orientation is strongly connected if it is strongly connected 
on all of the connected components of $G$. A classical theorem of Robbins
\cite[Chapter 12.1]{GrossYellen} states that 
a (not necessarily connected) graph $G$ has a strong orientation if and only 
if all of its connected components are 2-connected.

We recall the definition of the poset $\calS(G)$ of strongly connected 
orientations \cite{Amini}. The elements of $\calS(G)$ are pairs 
$(H,\omega_H)$, where $H$ is a subgraph of $G$ and $\omega_H$ is a 
strongly connected orientation of $H$.
A partial order $\preceq$ is defined by $(H,\omega_H)\preceq(H',\omega'_{H'})$ 
if $H'$ is a subgraph of $H$, and $\omega'_{H'}$ is a restriction of 
$\omega_H$ to $H'$. Observe that this poset has a unique maximal element 
$(\emptyset, \emptyset)$.

What are the sub-maximal elements of $\calS(G)$? These are minimal strongly 
oriented, and hence 2-connected, subgraphs of $G$. A minimal 2-connected 
subgraph is a circuit, and each circuit has two strongly connected 
orientations (the cyclic ones). Hence, Proposition~\ref{prop.Voronoi} 
implies that the sub-maximal elements of $\calS(G)$ are in bijection with 
the codimension one faces of the Voronoi cell of $\calF(G)$. In fact,
more is true.

\begin{theorem}[\cite{Amini}]
\label{thm.Amini}
Let $\calV(G)$ be the poset of faces of the Voronoi cell $V(G)$ of $\calF(G)$, 
with the partial order given by inclusion. Then the posets $\calV(G)$ and 
$\calS(G)$ are isomorphic, and the codimension of a face of $V(G)$ is equal 
to the genus of the subgraph corresponding to it under the isomorphism. 
\end{theorem}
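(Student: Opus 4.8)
The plan is to establish a bijection between the faces of the Voronoi cell $V(G)$ and the strongly connected orientations of subgraphs of $G$, and then check that it reverses the partial orders and behaves correctly with respect to codimension and genus. The starting point is the standard description of the faces of a Voronoi cell: the relevant (Voronoi-)vectors of a lattice $\calF(G)$ cut out the codimension-one faces via their perpendicular bisectors, and a face of any codimension is obtained by intersecting a maximal collection of such bisector hyperplanes that share a common point; equivalently, each face $F$ of $V(G)$ is determined by (and determines) the set of relevant vectors $v$ whose bisector $\{(x,v)=(v,v)/2\}$ contains $F$. By Proposition~\ref{prop.Voronoi}, these relevant vectors are exactly the circuit elements of $\calF(G)$, i.e.\ the oriented circuits of $G$. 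So a face $F$ corresponds to a set $\Sigma_F$ of oriented circuits of $G$, and I would aim to show that $\Sigma_F$ is precisely the set of cyclic orientations of the circuits contained in a strongly connected orientation $(H,\omega_H)$ of some subgraph $H$.

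The key step is to characterize, for a given set $\Sigma$ of oriented circuits, when the corresponding bisector hyperplanes have a common intersection of the expected codimension with $V(G)$ — that is, when $\Sigma$ ``supports a face.'' I would argue that this happens exactly when there is a strong orientation $\omega_H$ on the subgraph $H=\bigcup_{C\in\Sigma} C$ (union of underlying edge sets) such that every $C\in\Sigma$ is an oriented circuit compatible with $\omega_H$ (traversed in the direction of the orientation), and moreover $\Sigma$ consists of \emph{all} such circuits. One direction: given $(H,\omega_H)$, the circuits compatible with $\omega_H$ all satisfy $(x,C)=(C,C)/2$ at a common point, namely the (suitably scaled) point corresponding to the ``all-ones'' $\{0,1\}$-flow along $\omega_H$ — concretely, the barycentric-type point $x_H$ where each edge of $H$ contributes $1/2$ in its $\omega_H$-direction and edges outside $H$ contribute $0$; one checks $(x_H, C) = |C|/2 = (C,C)/2$ precisely because $C$ runs with the orientation. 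The other direction uses the combinatorial fact that if two oriented circuits both support the same face then on their common edges they must agree up to the sign forced by the face, which lets one patch local orientations into a global strong orientation of $H$ (invoking Robbins' theorem to know such orientations are exactly the $2$-connected condition). Finally I would identify the rank of $\Span_{\BR}\Sigma_F$ inside $\calF(G)$ with the genus (first Betti number) of $H$ — since the circuit elements of $H$ span $\calF(H)\subseteq\calF(G)$ — so that $\codim F = \dim\Span\Sigma_F = \operatorname{genus}(H)$, and the order-reversal is immediate: $F\subseteq F'$ iff $\Sigma_{F'}\subseteq\Sigma_F$ iff $H'\subseteq H$ with $\omega_{H'}=\omega_H|_{H'}$, which is exactly $(H,\omega_H)\preceq(H',\omega'_{H'})$.

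The main obstacle I expect is the ``patching'' step: showing that an arbitrary face-supporting set $\Sigma$ of oriented circuits really arises from a single global strong orientation, rather than from circuits whose orientations are only pairwise consistent. The subtlety is that a point $x\in V(G)$ lying on several bisectors imposes linear constraints $(x,C)=(C,C)/2$, and one must extract from these the existence of a consistent global orientation $\omega_H$; the natural approach is to feed the constraints through the structure of strict Voronoi vectors (the third bullet of Proposition~\ref{prop.Voronoi}, via cosets mod $2\calF(G)$) to pin down edge directions one at a time, and then verify strong connectivity of $H$ by showing every edge of $H$ lies on some $C\in\Sigma$ (hence on an oriented cycle), which by Robbins forces strong connectivity on each component. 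I would also need to confirm the correspondence is surjective onto $\calS(G)$ — i.e.\ every strongly connected orientation of every subgraph actually supports a face of the right codimension and is not accidentally ``absorbed'' into a larger face — which again comes down to checking that $x_H$ as constructed lies in the relative interior of the face cut out by $\Sigma$ and on no further bisectors, a dimension count that should follow once the span/genus identification is in place.
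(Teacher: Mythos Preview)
The paper does not prove this theorem: it is quoted from Amini's work \cite{Amini} and used as a black box throughout Section~\ref{sec:main}. There is therefore no ``paper's own proof'' to compare your proposal against.

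For what it is worth, your sketch follows the natural line of attack and identifies the genuine difficulty correctly: the patching step, i.e.\ showing that a collection of oriented circuits whose bisector hyperplanes share a face of $V(G)$ must in fact come from a single global strong orientation of their union. Your candidate witness point $x_H$ (half the indicator of $\omega_H$) is the right object for one direction, and the codimension/genus identification via $\Span\Sigma_F=\calF(H)$ is exactly what the paper later exploits in Proposition~\ref{prop.parallel}. But the converse direction and the surjectivity/relative-interior check are where Amini's argument does real work, and your proposal acknowledges but does not resolve them; if you want a self-contained proof you would need to consult \cite{Amini} for those steps.
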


\begin{example}
\label{ex.3D1}
Consider the graph $G$ shown in Figure \ref{fig.3triangles}.
Let us choose a circuit basis 
$$
C_1=e_3+e_4+e_7, \qquad C_2=e_5+e_6+e_7, \qquad C_3=e_1+e_2+e_3+e_4.
$$ 
The Gram matrix $(C_i,C_j)$ of $\calF(G)$ in this basis is
$$
\begin{pmatrix}
3 & 1 & 2  \\
1 & 3 & 0  \\
2 & 0 & 4  
\end{pmatrix}
$$

\begin{figure}
$$
\input 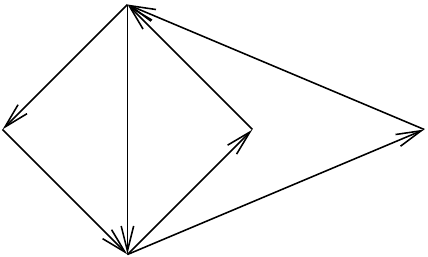_t
$$
\caption{A graph with $5$ vertices, $7$ edges $e_1,\dots,e_7$ with
orientation $\omega$}
\label{fig.3triangles}
\end{figure}

\begin{figure}
\includegraphics[height=4cm]{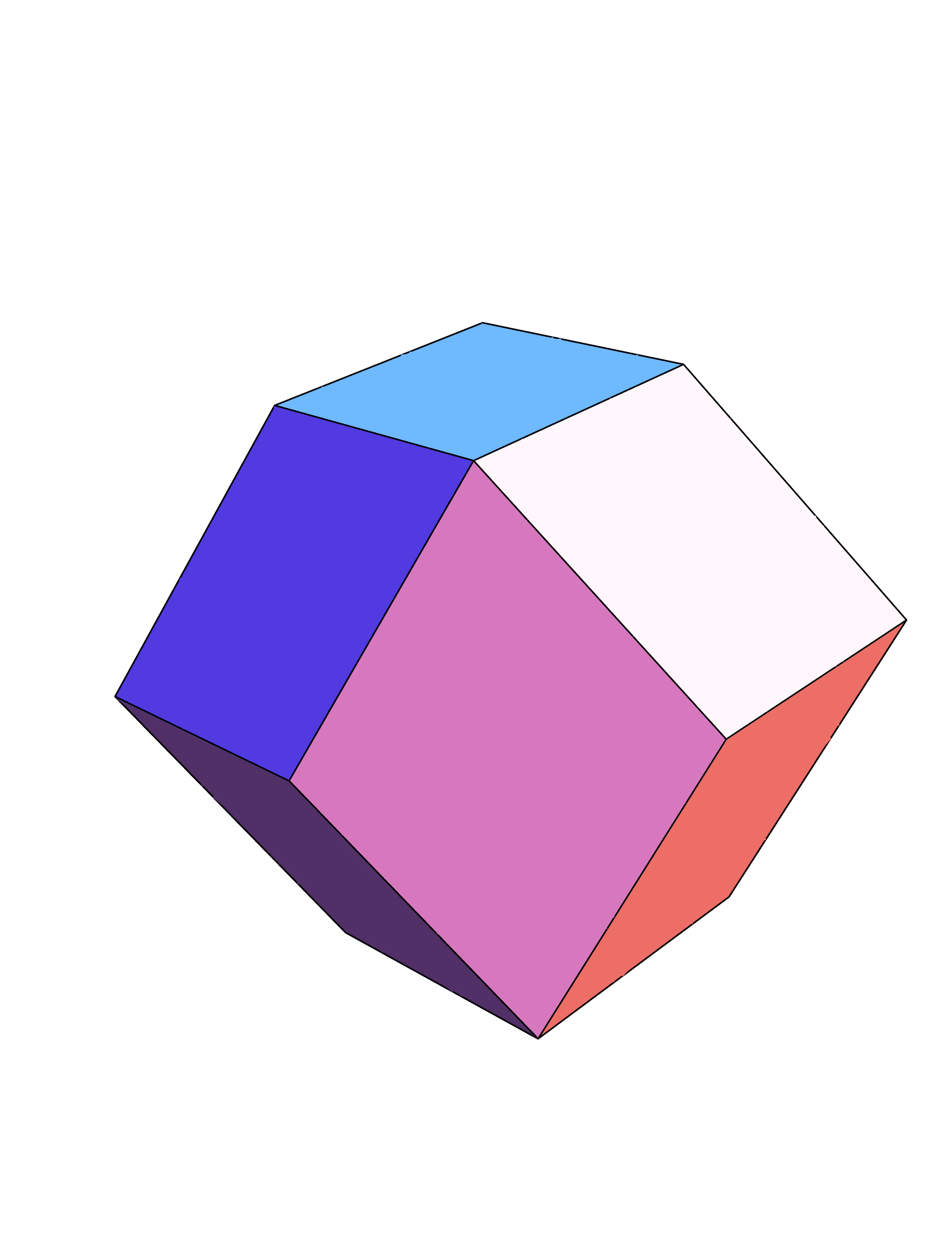}
\caption{A rhombic dodecahedron}
\label{fig.rhombicdodecahedron}
\end{figure}

The Voronoi cell of $\calF(G)$ is a {\em rhombic dodecahedron} (shown in
Figure \ref{fig.rhombicdodecahedron}) with $14$ vertices (in bijection with 
the strong orientations of $G$), $24$ edges, and $12$ faces (in bijection 
with the oriented circuits of $G$). All faces are rhombi: each oriented 
circuit is contained in four different strongly oriented genus-2 subgraphs. 
Every vertex has valency $3$ or $4$. The orientation $\omega$ of 
Figure~\ref{fig.3triangles} is in fact a strong orientation,
and it corresponds to a 4-valent vertex of the Voronoi cell, as there are
four $\omega$-compatibly oriented circuits given by
$$
C_1=e_3+e_4+e_7, \qquad C_2=e_5+e_6+e_7, \qquad C_3=e_1+e_2+e_3+e_4,
\qquad C_4=e_1+e_2+e_5+e_6. 
$$
\end{example}

An immediate corollary of Theorem~\ref{thm.Amini} is the following:

\begin{corollary}
\label{cor.UnionOfCircuits}
Let $F$ be a face of $\calV(G)$ expressed
as an intersection $$F=F_{C_1}\cap F_{C_2} \cap \dots \cap F_{C_s},$$ where 
$\{F_{C_i}: i=1 \dots s\}$ are codimension one faces corresponding to oriented 
circuits $C_1, C_2, \dots C_s$, respectively. Then the orientations of these 
circuits are compatible, and the face $F$ corresponds under Amini's 
isomorphism to the subgraph $C_1 \cup C_2 \cup \dots \cup C_s$ 
with the induced orientation. \qed
\end{corollary}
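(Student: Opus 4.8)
The plan is to push the statement through Amini's isomorphism $\Phi\colon\calS(G)\to\calV(G)$ of Theorem~\ref{thm.Amini}, under which each codimension one face $F_{C_i}$ corresponds to a sub-maximal element $(C_i,\omega_{C_i})$ of $\calS(G)$, where $\omega_{C_i}$ is one of the two cyclic orientations of the circuit $C_i$ (the bijection recalled just before the corollary). Since an intersection of faces of a polytope is again a face, $F=\bigcap_i F_{C_i}$ is a face of $V(G)$ with $F\subseteq F_{C_i}$ for all $i$, so in the poset $\calV(G)$ we have $F\preceq F_{C_i}$ for every $i$. Writing $(H,\omega_H):=\Phi^{-1}(F)$ and using that $\Phi$ is an order isomorphism gives $(H,\omega_H)\preceq(C_i,\omega_{C_i})$ for all $i$; unwinding the definition of $\preceq$ in $\calS(G)$, this says exactly that each $C_i$ is a subgraph of $H$ and that $\omega_H$ restricts to $\omega_{C_i}$ on $C_i$. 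The compatibility assertion is then immediate: the $\omega_{C_i}$ are all restrictions of the single orientation $\omega':=\omega_H|_{C_1\cup\cdots\cup C_s}$, so they agree on shared edges, and $(C_1\cup\cdots\cup C_s,\omega')$ is a well-defined oriented subgraph with $\omega'|_{C_i}=\omega_{C_i}$.

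Next I would check that $(C_1\cup\cdots\cup C_s,\omega')\in\calS(G)$, i.e.\ that $\omega'$ is strongly connected on each connected component. This rests on an elementary lemma: if a directed graph $D$ has its edge set covered by finitely many directed circuits and is connected as an undirected graph, then $D$ is strongly connected. Indeed, given vertices $u,w$ of $D$, pick circuits $C_i\ni u$ and $C_j\ni w$; connectedness yields a chain $C_i=C_{i_0},C_{i_1},\dots,C_{i_k}=C_j$ with consecutive circuits sharing a vertex, and concatenating the arcs that run around $C_{i_0}$ to its meeting point with $C_{i_1}$, then around $C_{i_1}$, and so on, produces a directed walk from $u$ to $w$. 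Applying this to each nontrivial component of $C_1\cup\cdots\cup C_s$ shows $(C_1\cup\cdots\cup C_s,\omega')$ is a legitimate element of $\calS(G)$.

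It remains to identify $H$ with $C_1\cup\cdots\cup C_s$, which I would do by sandwiching. On the one hand, $C_i\subseteq H$ for every $i$ forces $C_1\cup\cdots\cup C_s\subseteq H$ together with $\omega_H|_{C_1\cup\cdots\cup C_s}=\omega'$, hence $(H,\omega_H)\preceq(C_1\cup\cdots\cup C_s,\omega')$ and so $F=\Phi(H,\omega_H)\subseteq\Phi(C_1\cup\cdots\cup C_s,\omega')$. On the other hand $(C_1\cup\cdots\cup C_s,\omega')\preceq(C_i,\omega_{C_i})$ for each $i$ (it contains $C_i$ and restricts to $\omega_{C_i}$ there), so $\Phi(C_1\cup\cdots\cup C_s,\omega')\subseteq F_{C_i}$ for all $i$, giving $\Phi(C_1\cup\cdots\cup C_s,\omega')\subseteq\bigcap_i F_{C_i}=F$. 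Thus $\Phi(C_1\cup\cdots\cup C_s,\omega')=F=\Phi(H,\omega_H)$, and injectivity of $\Phi$ yields $(H,\omega_H)=(C_1\cup\cdots\cup C_s,\omega')$. The only step carrying genuine (if modest) content is the lemma that a connected union of directed circuits is strongly connected; everything else is formal manipulation of Theorem~\ref{thm.Amini} together with the face-lattice structure of a polytope. As a byproduct, the argument shows that $C_1\cup\cdots\cup C_s$ is independent of the chosen representation of $F$ as an intersection of facets.
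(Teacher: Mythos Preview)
Your argument is correct and is precisely the unpacking the paper leaves implicit: in the paper the corollary is marked with a \qed\ and no proof, signalling that the authors regard it as an immediate consequence of Theorem~\ref{thm.Amini}. Your route---pulling $F$ back through Amini's order isomorphism, reading off compatibility from the definition of $\preceq$ in $\calS(G)$, and then sandwiching to identify $(H,\omega_H)$ with the union---is exactly what one does to make that ``immediate'' explicit. The one place you add genuine content beyond formal poset manipulation is the small lemma that a connected edge-union of directed circuits is strongly connected, which is needed to know that $(C_1\cup\cdots\cup C_s,\omega')$ actually lies in $\calS(G)$; the paper silently assumes this, so your version is strictly more complete.
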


This observation lets us choose a better circuit basis: one where the sizes 
of intersections of pairs of basis circuits are computed by their pairings: 

\begin{definition}
A circuit basis is {\em cancellation-free} if, for any pair of basis 
circuits $C_i$ and $C_j$, the number of edges in the intersection of the 
two circuits is computed by their pairing: $|C_i\cap C_j|=|(C_i,C_j)|$.
\end{definition}

Note that spanning tree bases are always cancellation-free, as the 
intersection of any two basis circuits is a simple path. This in particular 
implies that cancellation-free circuit bases exist for any graph. The 
following remark shows how to choose a cancellation-free circuit basis 
based on $\calF(G)$ alone, without knowledge of $G$ or a spanning tree of $G$.

\begin{remark}
\label{rmk.GoodBasis}
For any pair of circuits $C_i$ and $C_j$, there are three possibilities:
\begin{enumerate}
\item 
The orientations of $C_i$ and $C_j$ are compatible. This is the case if and 
only if the corresponding faces $F_{C_i}$ and $F_{C_j}$ intersect.
\item 
The orientations of $C_i$ and $-C_j$ are compatible, as in Figure 
\ref{fig.ExampleGraph} for example. This is the case if and only if $F_{C_i}$ 
and $F_{-C_j}$ intersect. 

\noindent
(Note that for any circuit $C$ and corresponding 
face $F_C$, $F_{-C}=-F_C$. Here $-F_C$ the face parallel to and geometrically 
opposite from $F_{C}$. Note also that (1) and (2) co-occur if and only if 
the circuits $C_i$ and $C_j$ are edge-disjoint.)
\item 
Neither of the above is true, that is, $C_i$ and $C_j$ cannot be compatibly 
oriented. An example is given by the circuits $C_1$ and $C_2$ in 
Figure \ref{fig.BadCircuits}. This is the case if and only if the faces 
$\{F_{C_i}, F_{C_j}, F_{-C_i}, F_{-C_j}\}$ are pairwise disjoint.
\end{enumerate}
In the first two cases $|C_i \cap C_j|=|(C_i,C_j)|$, but not in the
third case. Hence, to choose a cancellation-free basis, one only needs to
make sure that all pairs of basis circuits are of the first two types, 
and this can be detected by examining the one-codimension faces of $\calV(G)$.
In Remark~\ref{rmk.SpanTree} we will also show how to choose a spanning 
tree basis, but this takes more work and is not necessary in order to 
compute $\calM(G)$.
\end{remark}

\begin{figure}[!htpb]
\input 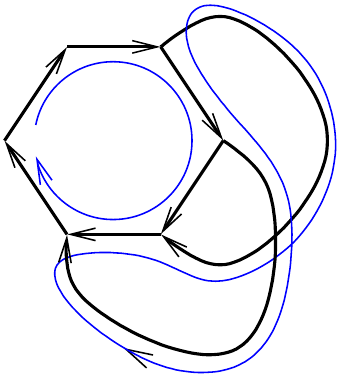_t
\caption{Two circuits where no compatible orientation is possible.}
\label{fig.BadCircuits}
\end{figure}

To continue understanding Theorem~\ref{thm.Amini} explicitly, we analyze 
which set of faces of the Voronoi cell $V(G)$ correspond to a given strongly 
orientable subgraph $H$ with different strong orientations.

\begin{proposition}
\label{prop.parallel}
Let $H$ be a strongly orientable subgraph of $G$. Let $\omega_H$ be a strong 
orientation 
of $H$ and let $F_{H,\omega_H}$ be the face of $V(G)$ corresponding to the pair 
$(H, \omega_H)$. Then the set of faces corresponding to $H$ with all of its 
different strong orientations is the set of faces parallel to, and of the 
same codimension as $F_{H,\omega_H}$. 
\end{proposition}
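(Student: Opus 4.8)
The plan is to prove that the linear subspace $\mathrm{lin}(F)$ parallel to the affine span of a face $F$ of $V(G)$ that corresponds, under the isomorphism of Theorem~\ref{thm.Amini}, to a pair $(H,\omega_H)$ depends only on the subgraph $H$ and not on the orientation $\omega_H$; precisely, that $\mathrm{lin}(F_{H,\omega_H}) = (\calF(H)\otimes\bbR)^\perp$, the orthogonal complement inside $\calF(G)\otimes\bbR$ of the cycle space of $H$. Since by Theorem~\ref{thm.Amini} the codimension of $F_{H,\omega_H}$ also equals the genus of $H$, this at once yields one inclusion of the claimed equality of sets: every face of $V(G)$ corresponding to $H$ with some strong orientation is parallel to $F_{H,\omega_H}$ and of the same codimension. (The case $H=\emptyset$, where $F$ is all of $V(G)$, is trivial, so assume $H$ has positive genus.)

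To compute $\mathrm{lin}(F_{H,\omega_H})$ I would first recall the standard fact that a proper face of a polytope is the intersection of the facets containing it, and hence that its affine span is the intersection of the supporting hyperplanes of those facets. By Theorem~\ref{thm.Amini} together with the description of the order $\preceq$ on $\calS(G)$, a facet $F_C$ of $V(G)$ --- which by Proposition~\ref{prop.Voronoi} corresponds to an oriented circuit $C$ and is supported by the hyperplane $(x,C)=\tfrac{(C,C)}{2}$ --- contains $F_{H,\omega_H}$ exactly when $C$ is a circuit of $H$ carrying the orientation induced by $\omega_H$; since $H$ is $2$-connected (being strongly orientable) it is the union of these circuits, consistently with Corollary~\ref{cor.UnionOfCircuits}. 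Therefore the affine span of $F_{H,\omega_H}$ is $\bigcap_C\{x:(x,C)=\tfrac{(C,C)}{2}\}$ over the $\omega_H$-oriented circuits $C$ of $H$, so $\mathrm{lin}(F_{H,\omega_H})=(\Span_\bbR\{C:C\text{ an }\omega_H\text{-oriented circuit of }H\})^\perp$ and the codimension of $F_{H,\omega_H}$ equals $\dim \Span_\bbR\{C\}$. Invoking Theorem~\ref{thm.Amini} once more, this dimension equals the genus of $H$, which is exactly $\dim(\calF(H)\otimes\bbR)$; since all the circuits $C$ lie in $\calF(H)\otimes\bbR$, their span is all of it, giving $\mathrm{lin}(F_{H,\omega_H})=(\calF(H)\otimes\bbR)^\perp$. (Alternatively, that the $\omega_H$-oriented circuits of $H$ span the full cycle space can be shown by a direct flow-decomposition argument: every edge of $H$ lies on an $\omega_H$-oriented circuit, so the sum of one such circuit per edge is an integer flow positive on every edge; adding a large multiple of it to any prescribed cycle makes all coordinates positive, and a positive integer flow decomposes as a nonnegative combination of $\omega_H$-oriented circuits.)

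For the reverse inclusion, let $F'$ be any face of $V(G)$ that is parallel to and of the same codimension as $F_{H,\omega_H}$; by Theorem~\ref{thm.Amini} it corresponds to a pair $(H',\omega_{H'})$, and the computation above gives $\mathrm{lin}(F')=(\calF(H')\otimes\bbR)^\perp$, while ``parallel'' together with ``same codimension'' forces $\mathrm{lin}(F')=\mathrm{lin}(F_{H,\omega_H})=(\calF(H)\otimes\bbR)^\perp$. Taking orthogonal complements inside $\calF(G)\otimes\bbR$ yields $\calF(H')\otimes\bbR=\calF(H)\otimes\bbR$. It remains to recover the edge set from this subspace: $\calF(H)\subseteq\calF(G)$ consists of flows supported on $E(H)$, and every edge of the $2$-connected graph $H$ lies on a circuit of $H$, so $E(H)=\{e\in E(G):x_e\text{ is not identically zero on }\calF(H)\otimes\bbR\}$, and likewise for $H'$. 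Hence $E(H')=E(H)$, so $H'=H$ as spanning subgraphs and $F'$ corresponds to $H$ with one of its strong orientations, completing the proof.

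The main obstacle is the middle paragraph, and specifically the point that the $\omega_H$-oriented circuits of $H$ span the whole cycle space $\calF(H)\otimes\bbR$ rather than a proper subspace of it; deducing this from the codimension statement of Theorem~\ref{thm.Amini} is the cleanest route, and it is worth first isolating as a lemma the elementary polytope fact that the affine span of a face $F$ is the intersection of the supporting hyperplanes of the facets through $F$. The rest is bookkeeping with Amini's isomorphism and finite-dimensional linear algebra.
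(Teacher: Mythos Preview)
Your proof is correct and follows essentially the same approach as the paper's: both arguments identify the direction of $F_{H,\omega_H}$ with the orthogonal complement of the cycle space $\calF(H)\otimes\bbR$, and then observe that for strongly orientable $H$ this subspace determines $E(H)$. The only cosmetic difference is that the paper writes $F_{H,\omega_H}$ as an intersection of exactly $s$ facets (picking $s$ linearly independent circuits), whereas you intersect over all $\omega_H$-oriented circuits and then invoke the codimension clause of Theorem~\ref{thm.Amini} to conclude they span $\calF(H)\otimes\bbR$; your route is slightly more explicit on this spanning step, but the underlying idea is identical.
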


\begin{proof}
If $F_{H,\omega_H}$ is of codimension $s$, then it can be written (not 
necessarily uniquely) as an intersection 
$F_{H,\omega_H}=F_{C_1}\cap \dots \cap F_{C_s}$, where $F_{C_1}, \dots F_{C_s}$ 
are codimension one faces corresponding to oriented circuits $C_1,\dots C_s$.
Then, by Corollary~\ref{cor.UnionOfCircuits}, $H$ can be written as 
$H=C_1\cup \dots \cup C_s$. Consider another codimension $s$ face $F'$, and 
write $F'=F_{C'_1} \cap \dots \cap F_{C'_s}$ as the intersection of $s$ 
codimension one faces. 
 
The ``direction'' of $F_{H,\omega_H}$ is the orthogonal complementary subspace 
to the span of the lattice vectors $\{C_1,\dots,C_s\}$, and similarly for 
$F'$. So $F'$ and $F$ are parallel if and only if 
$\text{span}\{C'_1, \dots, C'_s\}=\text{span}\{C_1,\dots ,C_s\}$. 
 
Note that $\text{span}\{C_1,\dots ,C_s\}$ is in fact the cycle space of $H$, 
that is, the subspace of $\calF(G) \otimes \mathbb R$ containing the cycles 
in $H$ (i.e., the first homology of $H$). Similarly, 
$\text{span}\{C'_1, \dots, C'_s\}$ is the cycle space of $H'$. Since all 
components of $H$ and $H'$ are two-connected, every edge in $H$ or $H'$
participates in a circuit in $H$ or $H'$, respectively. Hence, cycle spaces 
of $H$ and $H'$ agree if and only if $H'=H$, completing the proof.
\end{proof}

\subsection{3-Connected graphs}
Our next task is to understand the edges (1-dimensional faces) of the Voronoi 
cell, which is easily done for 3-connected graphs, and leads to
an immediate construction of $\calM(G)$ in this case. 
By Amini's Theorem the edges of $\calV(G)$ correspond to pairs $(H,\omega_H)$, 
where $H$ 
is a maximal strongly orientable proper subgraph of $G$ and $\omega_H$ is a 
strong orientation of $H$. If $G$ is 3-connected, then for any edge 
$e\in E(G)$, $G-e$ is 2-connected and hence strongly orientable, and all 
maximal strongly orientable proper subgraphs are of this form. By 
Proposition \ref{prop.parallel}, there is a {\em parallel class} of edges 
of $V(G)$ corresponding to the subgraph $G-e$ with different strong 
orientations. This results in the following construction of the 
graphic matroid $\calM(G)$ from $\calF(G)$ for 3-connected graphs $G$.

\begin{theorem}
\label{thm.3conn}
Let $G$ be a 3-connected graph with lattice of integer flows $\calF(G)$, 
whose Voronoi cell is denoted $V(G)$. 
Then 
\begin{enumerate} 
\item 
There is a bijection between $E(G)$ (that is, the ground set of $\calM(G)$) 
and the parallel classes of edges of the Voronoi cell $V(G)$.
\item 
There is a bijection between the circuits of $G$ (that is, the circuits of 
$\calM(G)$) and the parallel classes of codimension one faces of $V(G)$.
\item 
A given edge $e\in E(G)$ belongs to a circuit $C$, that is 
$C \not\subset G-e$,
if and only if no member of the parallel class of 
edges of $V(G)$ corresponding to $e$ belongs to the face $F_C$ (and 
consequently none of them belong to $F_{-C}$ either). 
\end{enumerate}
\end{theorem}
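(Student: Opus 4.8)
The plan is to derive all three statements directly from Amini's Theorem (Theorem~\ref{thm.Amini}) together with Proposition~\ref{prop.parallel} and Corollary~\ref{cor.UnionOfCircuits}, using the key fact that 3-connectedness makes every subgraph of the form $G-e$ two-connected, hence strongly orientable. First I would establish part (1). By Theorem~\ref{thm.Amini}, edges (1-dimensional faces) of $V(G)$ correspond to pairs $(H,\omega_H)$ where $H$ has genus equal to the genus of $G$ minus one, and $H$ must be strongly orientable; since the maximal proper strongly orientable subgraphs are exactly the $G-e$ for $e\in E(G)$ (here 3-connectedness is used: $G-e$ is 2-connected, and conversely any subgraph obtained by deleting two or more edges has genus at most genus$(G)-2$ unless those deletions are redundant, which cannot happen in a circuit-spanning graph), Proposition~\ref{prop.parallel} groups the 1-faces corresponding to a fixed $G-e$ into exactly one parallel class. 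So $e\mapsto\{\text{1-faces corresponding to }(G-e,\omega)\}$ is the desired bijection. I would need to check carefully that distinct edges $e\neq f$ give genuinely distinct subgraphs $G-e\neq G-f$ and hence, by the last line of the proof of Proposition~\ref{prop.parallel} (cycle spaces determine the subgraph), non-parallel classes of edges of $V(G)$.

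Next, part (2) is essentially a restatement of Proposition~\ref{prop.Voronoi} combined with Proposition~\ref{prop.parallel}: the codimension one faces of $V(G)$ correspond to oriented circuits of $G$, and two such faces $F_C, F_{C'}$ are parallel iff $C$ and $C'$ have the same cycle space iff $C'=\pm C$ as (unoriented) circuits. Thus parallel classes of codimension one faces biject with unoriented circuits of $G$, which are precisely the circuits of the graphic matroid $\calM(G)$. For part (3), the key observation is that, under Amini's isomorphism, a face $F'$ is contained in a face $F$ exactly when the subgraph of $F$ is a subgraph of (contained in) the subgraph of $F'$ — so a 1-face corresponding to $(G-e,\omega)$ lies in the codimension one face $F_C$ precisely when the circuit $C$ is a subgraph of $G-e$, i.e. $C\subset G-e$, i.e. $e\notin C$. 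Negating: $e\in C$ (equivalently $C\not\subset G-e$) iff no 1-face in the parallel class of $e$ is contained in $F_C$. The parenthetical about $F_{-C}$ follows because $C$ and $-C$ involve the same edge set, so $e\in C\iff e\in -C$, and $F_{-C}=-F_C$ carries the parallel class of $e$ identically.

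The main obstacle I anticipate is part (1), specifically verifying that \emph{every} maximal proper strongly orientable subgraph of a 3-connected $G$ has the form $G-e$ (not $G$ minus a larger edge set), and that the map $e\mapsto(G-e)$ is injective in the relevant sense. For the first point: a strongly orientable subgraph has all components 2-connected; if $H\subsetneq G$ with $|E(G)\setminus E(H)|\geq 2$, then since $G$ is 3-connected, $G$ minus any two edges is still connected, so one could add back an edge and stay 2-connected (this needs a small argument — deleting an edge from a 2-connected graph and asking whether 2-connectedness survives is exactly the 2-cut condition, which 3-connectedness forbids), contradicting maximality. For injectivity: $G-e=G-f$ as subgraphs forces $e=f$ since subgraphs are determined by their edge sets. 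These are routine but deserve explicit statements, and I would also note that genus$(G-e)=$ genus$(G)-1$ for every $e$ (true because $e$ lies in a circuit, as $G$ is 2-connected), so that these subgraphs indeed correspond to 1-faces and not higher-dimensional faces. Everything else is bookkeeping with the order-reversing isomorphism $\calV(G)\cong\calS(G)$.
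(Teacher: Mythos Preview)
Your proposal is correct and follows the same approach as the paper, which presents Theorem~\ref{thm.3conn} as an immediate consequence of the discussion preceding it (Amini's Theorem~\ref{thm.Amini}, Proposition~\ref{prop.parallel}, and the observation that in a 3-connected graph the maximal proper strongly orientable subgraphs are exactly those of the form $G-e$). One small point to tighten in part~(3): the containment $F'\subseteq F$ in $\calV(G)$ corresponds under Amini's isomorphism to containment of subgraphs \emph{together with} compatibility of orientations, so the implication ``$e\notin C\Rightarrow$ some member of the parallel class of $e$ lies in $F_C$'' still needs the (easy) fact that the given cyclic orientation of $C$ extends to a strong orientation of the 2-connected graph $G-e$ (e.g.\ via an ear decomposition of $G-e$ starting from $C$, or by applying Theorem~\ref{thm.Amini} to $G-e$); the paper leaves this implicit as well.
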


\subsection{2-Connected graphs}

The situation is more complicated when $G$ is not 3-connected. Edges of the 
Voronoi cell still correspond to pairs $(H, \omega_H)$, where $H$ is a maximal 
strongly orientable proper subgraph of $G$. The subgraph $H$ may be of the 
form $G-e$ for some edge $e$, when $e$ does not participate in a 2-cut and 
hence $G-e$ is 2-connected, thus strongly orientable. 

It is also possible that $H=G-S$, where $S$ is a set of multiple edges: all 
of these edges must participate in 2-cuts, otherwise $H$ would not be maximal. 
Note that in this case $H$ has several connected components: indeed,
if an edge $e\in S$ participates in a 2-cut $\{e,f\}$ in $g$, then $f$ is a 
bridge in $G-e$, and so $f\notin H$ as $H$ is strongly orientable.

Constructing $\calM(G)$ requires understanding the complements $S$ in $E(G)$ 
of maximal strongly orientable proper subgraphs. They turn out to 
be what we will call {\em 2-cut blocks}, so we begin by defining and 
describing these.

Lemmas~\ref{lem.EqRel},~\ref{lem.2CB} and ~\ref{lem.2CBCircuit} could be 
deduced using classical results on minimally 2-connected graphs (see
for example \cite[Chapter 1.3]{Bol}), but we choose to prove them directly as 
the proofs are short and elementary.

\begin{lemma}
\label{lem.EqRel}
The relation ``edges $e$ and $f$ are either the same or form a 2-cut 
in $G$'' is an equivalence relation on $E(G)$.
\end{lemma}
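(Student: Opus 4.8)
The plan is to verify the three axioms of an equivalence relation for the relation $\sim$ on $E(G)$ defined by $e \sim f$ if and only if $e = f$ or $\{e,f\}$ is a 2-cut of $G$. Reflexivity and symmetry are immediate from the definition (the condition ``$\{e,f\}$ is a 2-cut'' is symmetric in $e$ and $f$, and reflexivity is built in), so the entire content of the lemma is \emph{transitivity}. So suppose $e \sim f$ and $f \sim g$ with $e,f,g$ pairwise distinct (the cases where two of them coincide are trivial), meaning $\{e,f\}$ and $\{f,g\}$ are both 2-cuts; I must show $\{e,g\}$ is a 2-cut, i.e.\ that $G - \{e,g\}$ is disconnected.

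First I would set up the structure coming from the hypothesis $\{e,f\}$ being a 2-cut. Since $G$ is 2-connected, $G-e$ is still connected, but $G-\{e,f\}$ is disconnected; hence $f$ is a bridge of $G-e$, so $G - \{e,f\}$ has exactly two connected components, say $A$ and $B$, and $e$ necessarily has one endpoint in $A$ and one in $B$ (otherwise $G-f$ would be disconnected, contradicting 2-connectedness), and likewise $f$ has one endpoint in each of $A$, $B$. Running the same argument with the 2-cut $\{f,g\}$: $G - \{f,g\}$ has two components $A'$, $B'$, with both $f$ and $g$ straddling them. The key step is to compare these two partitions of $V(G)$. Because $f$ is a bridge of both $G-e$ and $G-g$, and the two ``sides'' of $f$ are essentially forced, I expect to show that $\{A,B\}$ and $\{A',B'\}$ are the \emph{same} partition of $V(G)$ — concretely, deleting $f$ from $G$ splits $V(G)$ into two pieces in a way that does not depend on which of $e$ or $g$ we also removed, precisely because $e$ (resp.\ $g$) lies entirely ``across'' the cut. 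More carefully: in $G - f$, removing $e$ disconnects it into $A \sqcup B$; since $G-f$ is connected and $e$ is therefore a bridge of $G-f$ with $A,B$ its sides, and the same holds for $g$ with sides $A',B'$; a bridge of a connected graph determines a unique bipartition of the vertex set, and since $g$ is an edge of $G-f$ joining a vertex of $A$ to a vertex of $B$ (as $g \ne e,f$ and $g$'s endpoints lie in the two components $A',B'$ which I will identify with $A,B$), deleting $g$ from $G-f$ yields exactly $A \sqcup B$ again. Hence $G - \{e,g\}$ — wait, rather $G-\{f\}-\{e,g\}$ is disconnected; but I actually want $G - \{e,g\}$.

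To finish, I would argue directly that $G - \{e,g\}$ is disconnected using the common partition $\{A,B\}$: every edge of $G$ other than $e$, $f$, $g$ lies inside $A$ or inside $B$ (since removing $e,f$ already separates them, and removing $f,g$ already separates them, so no third edge can cross), while $f$ has both endpoints available in $G - \{e,g\}$ but is the \emph{only} edge of $G - \{e,g\}$ crossing between $A$ and $B$; removing $f$ alone from $G-\{e,g\}$ would disconnect it, but that is not quite enough — I need $G - \{e,g\}$ itself disconnected. The resolution: in fact $f$ being a bridge of $G - e$ \emph{and} $g$ lying on one side means that after removing both $e$ and $g$, the component of $G-e$ that $g$ was internal to gets further cut, but $f$ is still there; so $G-\{e,g\}$ need not be disconnected by this crude count. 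The honest approach, which I expect to be the main obstacle, is to pin down that the two 2-cuts induce the \emph{same} vertex-bipartition $\{A,B\}$, and then observe that $g$, having one endpoint in $A$ and one in $B$, together with $e$ forms a cut: $G - \{e,g\}$ restricted to the crossing edges leaves only $f$, and $f$ together with the fact that $G - \{f,g\}$ is disconnected with $e$-across-it forces $G-\{e,g\}$ to be disconnected by a short symmetric-difference argument on the cocycles (edge cuts) — the set of edges joining $A$ to $B$ is exactly $\{e,f,g\}$ if the cuts share a side, or one verifies via the cocycle space that $\{e,f\}$ and $\{f,g\}$ being minimal cuts (cocircuits) of the graphic matroid implies $\{e,g\} = \{e,f\} \,\triangle\, \{f,g\}$ is a disjoint union of cocircuits, hence contains a cocircuit, hence is a cut (and being a 2-element set, \emph{is} a 2-cut). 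This matroid-cocircuit reformulation — a 2-cut of $G$ is exactly a $2$-element cocircuit of $\calM(G)$, and the symmetric difference of two cocircuits is a disjoint union of cocircuits — is the cleanest route, and I would present the elementary graph argument above as the concrete unpacking of it, with the identification of the common bipartition $\{A,B\}$ as the technical heart.
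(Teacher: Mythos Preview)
Your main line of attack contains a genuine error: the two bipartitions $\{A,B\}$ and $\{A',B'\}$ are \emph{not} the same in general, and $g$ need not cross between $A$ and $B$. Take $G$ to be a $4$-cycle on vertices $v_0,v_1,v_2,v_3$ with $e=v_0v_1$, $f=v_1v_2$, $g=v_2v_3$. Then $G-\{e,f\}$ has components $A=\{v_1\}$ and $B=\{v_0,v_2,v_3\}$, while $G-\{f,g\}$ has components $\{v_2\}$ and $\{v_0,v_1,v_3\}$; these are different partitions, and $g=v_2v_3$ has both endpoints in $B$. So the attempt to identify the partitions, and the subsequent claim that $g$ joins $A$ to $B$, both fail, and with them the conclusion that the crossing edges are exactly $\{e,f,g\}$.

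Your fallback cocycle argument, however, is correct and does salvage the proof: writing $\{e,f\}=\delta(S)$ and $\{f,g\}=\delta(T)$ for proper nonempty $S,T\subset V(G)$, one has $\{e,g\}=\delta(S\triangle T)$, and $S\triangle T$ is proper and nonempty since otherwise $\delta(S)=\delta(T)$, contradicting $e\neq g$. So $\{e,g\}$ is an edge cut, hence a $2$-cut. This is a legitimate route, though you arrive at it only after the failed partition-matching and present it tentatively.

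The paper's argument is considerably shorter and sidesteps both the partition comparison and the cocycle machinery. It simply observes that in the connected graph $G-f$ both $e$ and $g$ are bridges (immediate from the hypotheses), so $G-\{e,f,g\}$ has three connected components; re-inserting the single edge $f$ can merge at most two of them, whence $G-\{e,g\}$ remains disconnected. This three-component counting is the missing idea that would have let you avoid the partition comparison entirely.
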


\begin{proof}
The reflexive and symmetric properties are obvious, so the only point to prove 
is transitivity. Namely, we need to show that if $\{e, f, h\}$ are three 
distinct edges of $G$ and $\{e,f\}$ and $\{f,h\}$ are both 2-cuts in $G$ 
then so is $\{e,h\}$.
 
Indeed, in the graph $G-f$, the edges $e$ and $h$ are both bridges, so the 
graph $G-\{e,f,h\}$ has three connected components. Of these three $f$ can 
only join two, so the graph $G-\{e,h\}$ is still disconnected, meaning that
$\{e,h\}$ is a 2-cut.
\end{proof}

\begin{definition}
\label{def.2CB}
We call the equivalence classes of the above relation {\em 2-cut blocks} 
in $E(G)$. 
\end{definition}

\begin{lemma}
\label{lem.2CB}
If $S \subset E(G)$, then the subgraph $H=G-S$ is a maximal proper 
strongly orientable subgraph of $G$ if and only if $S$ is a 2-cut block.
\end{lemma}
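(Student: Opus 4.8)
The plan is to prove both implications of the biconditional in Lemma~\ref{lem.2CB} using Lemma~\ref{lem.EqRel} and the basic properties of strong orientability (a graph with no bridges in any component is strongly orientable, by Robbins' theorem as cited).

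First I would prove that if $S$ is a 2-cut block, then $H = G-S$ is strongly orientable and maximal among proper strongly orientable subgraphs. For strong orientability, I claim no component of $H$ contains a bridge. Suppose $f$ is a bridge of $H$; then $f \notin S$, and since $G$ is $2$-connected, $f$ lies in some circuit of $G$, so some edge $e$ of that circuit has been deleted, i.e. $e \in S$ and $\{e,f\}$ is a $2$-cut of $G$ (deleting $e$ makes $f$ a bridge, since the only circuits through $f$ in $G$ must use $e$ — here I need to argue carefully that deleting \emph{all} of $S$ rather than just $e$ is what makes $f$ a bridge, but actually it suffices to observe $f$ is a bridge of $G-S \supseteq$ nothing; the cleaner route is: $f$ a bridge of $H$ means every circuit of $G$ through $f$ meets $S$, and a minimal such circuit meets $S$ in exactly one edge $e$, giving a $2$-cut $\{e,f\}$). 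Then $f$ is in the same $2$-cut block as $e$, contradicting $f \notin S$ while $e \in S$ (2-cut blocks are the equivalence classes, so $e \in S$ forces its whole class, including $f$, into $S$). Hence $H$ is strongly orientable. For maximality: if $H \subsetneq H' \subseteq G$ with $H'$ strongly orientable, pick $e \in S \cap E(H')$; since $e$ participates in a $2$-cut $\{e,g\}$ of $G$ with $g$ in the same block, $g \in S$, so $g \notin H'$, and then $e$ is a bridge of $G - g \supseteq H'$... I would instead argue: $e \in S$ means $e$ is in a $2$-cut $\{e,g\}$; within $H'$, if both $e$ and $g$ were present we'd need a circuit through $e$ in $G$ avoiding $g$, which doesn't exist; if $g \notin H'$ then $e$ is a bridge of $H'$, contradicting strong orientability. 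Either way $H'$ cannot properly contain $H$ and be strongly orientable.

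Conversely, I would show that if $H = G-S$ is a maximal proper strongly orientable subgraph, then $S$ is a 2-cut block. First, $S$ is nonempty (else $H = G$ is not proper). I claim every $e \in S$ lies in a $2$-cut of $G$: by maximality, $H \cup \{e\} = G - (S \setminus \{e\})$ is not strongly orientable, so it has a bridge $f$; if $f = e$ then $e$ is a bridge of $G - (S\setminus\{e\})$, and combined with the fact that $G$ has no bridges, $e$ together with some deleted edge forms a $2$-cut, but more directly one shows $f$ must be a genuine bridge created precisely because $e$ is the "only other edge" — the cleanest statement is that $H\cup\{e\}$ having a bridge $f$ while $H$ is strongly orientable forces $f=e$ or forces $f$ and $e$ to be a $2$-cut of $H \cup \{e\} \cup \{g\}$ for deleted $g$; pushing this through, every edge of $S$ is $2$-cut-equivalent (in the sense of Lemma~\ref{lem.EqRel}) to some fixed reference edge, using transitivity. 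Then one checks $S$ is exactly one full equivalence class: it can't be a union of two or more classes, since deleting fewer edges (only one class) would already destroy strong orientability in a controlled way and give a strictly larger strongly orientable subgraph, contradicting... no — rather, if $e, e' \in S$ are in different blocks, consider whether $H \cup \{e\}$ is strongly orientable; and conversely if $e \in S$ and $e' \notin S$ are in the same block then $\{e,e'\}$ is a $2$-cut of $G$, making $e'$ a bridge of $G - e \supseteq H$, contradicting $e' \in E(H)$ with $H$ strongly orientable. This last point shows $S$ is a union of blocks; minimality/maximality considerations then pin it to a single block.

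I expect the main obstacle to be the careful bookkeeping in the converse direction: translating "$H \cup \{e\}$ is not strongly orientable" (which only says some component has a bridge) into "$e$ participates in a $2$-cut of the full graph $G$," since the bridge of $H \cup \{e\}$ need not obviously be $e$ itself, and one must rule out or handle the case where deleting the large set $S \setminus \{e\}$ is what really produces the bridge. The resolution is to take a minimal circuit of $G$ through the offending bridge edge and argue it meets the deleted set in exactly one edge; iterating with Lemma~\ref{lem.EqRel}'s transitivity then shows all of $S$ collapses into one $2$-cut block. The forward direction and the "$S$ is a union of blocks" half of the converse are both short, essentially immediate from the definitions and the no-bridge characterization of strong orientability.
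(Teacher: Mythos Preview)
Your proposal contains all the needed ingredients and, by the end, lands on the same argument as the paper, but the organization is tangled and one intermediate step is wrong. The paper's proof is much shorter because it isolates two clean claims and lets maximality fall out for free: (A) if $G-S$ is strongly orientable then $S$ must be a union of 2-cut blocks (this is exactly your observation near the end: if $e\in S$, $f\notin S$, and $\{e,f\}$ is a 2-cut, then $f$ is a bridge of $G-e\supseteq G-S$, contradicting strong orientability); and (B) if $S$ is a single 2-cut block then $G-S$ has no bridges. Once (A) and (B) are in hand, proper strongly orientable subgraphs correspond precisely to nonempty unions of blocks, so the maximal ones correspond to single blocks, and both directions of the lemma are immediate. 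Your separate maximality argument in the forward direction and your detour through ``$H\cup\{e\}$ is not strongly orientable'' in the converse direction are both unnecessary.

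The ``minimal circuit'' step you propose for (B) does not work. If $S$ is a 2-cut block with $|S|\ge 2$, then every circuit of $G$ meets $S$ in either $0$ or $|S|$ edges (this is the content of Lemma~\ref{lem.2CBCircuit}, whose proof is independent), so no circuit through your bridge $f$ meets $S$ in exactly one edge. Moreover, even if some circuit through $f$ met $S$ in a single edge $e$, that alone would not force $\{e,f\}$ to be a 2-cut of $G$. The correct route to (B) is the one the paper states (tersely): if $e\notin S$ were a bridge of $G-S$, then $e$ would lie in a 2-cut $\{e,f\}$ with $f\in S$, putting $e$ in the same block as $f$ and hence in $S$, a contradiction. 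Concretely, for $e\notin S$ and any $s\in S$ the pair $\{e,s\}$ is not a 2-cut (different blocks), so $e$ lies in a circuit of $G-s$; that circuit avoids $s$ and therefore, by the block property, avoids all of $S$, showing $e$ is not a bridge of $G-S$.
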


\begin{proof}
Observe that if $H$ is strongly orientable then $S$ is a union of 2-cut 
blocks. Indeed, if $H$ is strongly orientable then all of its connected 
components are two-connected, and hence if for some edges $e,f \in E(G)$, 
$e\in S$ and $\{e,f\}$ is a 2-cut, then it must be that $f \in S$ as well. 
On the other hand when $S$ is a 2-cut block then $G-S$ is strongly orientable 
as it contains no bridges: if $e$ is a bridge in $G-S$, then $e$ 
participates in a 2-cut $\{e,f\}$ in $G$ with $f \in S$, contradicting that 
$S$ is a 2-cut block. Hence, if $G-S$ is maximal then $S$ is a single 2-cut 
block and vice versa, as needed. 
\end{proof}

For each edge $\epsilon$ of the Voronoi cell $V(G)$, let $[\epsilon]$ denote 
the set of all the edges of $V(G)$ parallel to $\epsilon$, that is, the 
parallel class of $\epsilon$. By Proposition~\ref{prop.parallel}, the 
parallel classes $[\epsilon]$ are in one-to-one correspondence with maximal 
2-connected proper subgraphs $H=G-S$, where $S$ is a 2-cut block. Recall that 
$S$ is a single edge $\{e\}$ if and only if $G-e$ is two-connected,
that is if and only if $e$ does not participate in any 2-cuts. 
Let $S_{[\epsilon]}$ be the 2-cut block corresponding to the parallel class 
$[\epsilon]$.

The construction of $\calM(G)$ comes down to detecting the 
size of $S_{[\epsilon]}$ for each $[\epsilon]$. As in the case of 3-connected 
graphs in Theorem~\ref{thm.3conn}, one can then list the circuits 
of $G$ and tell which 2-cut blocks participate in each circuit based on the 
poset of faces of $V(G)$. Note that circuits are always unions of entire 
two-cut blocks, as the following Lemma shows.

\begin{lemma}
\label{lem.2CBCircuit}
If $S$ is a 2-cut block and $C$ is a circuit in $G$, then either $S$ and 
$C$ are disjoint or $S\subset C$.
\end{lemma}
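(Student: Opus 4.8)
The plan is to argue by contradiction, using the defining property of a 2-cut block as an equivalence class of the relation from Lemma~\ref{lem.EqRel}. Suppose $S$ is a 2-cut block and $C$ is a circuit with $S\cap C\neq\emptyset$ but $S\not\subset C$. Then I can pick an edge $e\in S\cap C$ and an edge $f\in S\setminus C$. Since $e$ and $f$ lie in the same 2-cut block, either $e=f$ (impossible, as $e\in C$ and $f\notin C$) or $\{e,f\}$ is a 2-cut of $G$; so $\{e,f\}$ is a 2-cut. The goal is to derive a contradiction from the fact that a 2-cut $\{e,f\}$ and a circuit $C$ cannot meet in exactly one of $e,f$.

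The key step is the following classical observation about circuits and cocircuits (2-cuts): a circuit and a cocircuit of a graph cannot intersect in exactly one edge. Concretely, I would argue directly: since $\{e,f\}$ is a 2-cut, $G-\{e,f\}$ has (at least) two connected components; let $A$ and $B$ be the two sides of this edge cut, so every edge of $G$ other than $e,f$ has both endpoints in $A$ or both in $B$, while $e$ and $f$ each have one endpoint in $A$ and one in $B$ (if one of them had both endpoints on the same side it would not be needed to disconnect, contradicting minimality of the 2-cut). Now traverse the circuit $C$ as a closed walk. Each time the walk crosses between $A$ and $B$ it must use $e$ or $f$, and since a circuit uses no edge twice, the number of such crossings is exactly $|C\cap\{e,f\}|$. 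But a closed walk crosses a vertex partition an even number of times, so $|C\cap\{e,f\}|\in\{0,2\}$. Since $e\in C$ we get $|C\cap\{e,f\}|=2$, hence $f\in C$, contradicting $f\notin C$.

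Therefore no such $e,f$ exist: either $S\cap C=\emptyset$, or every edge of $S$ lies in $C$, i.e.\ $S\subset C$. The main obstacle is making the ``even number of crossings'' step fully rigorous, in particular justifying that for a minimal edge cut $\{e,f\}$ both $e$ and $f$ genuinely join the two sides $A$ and $B$ (rather than, say, $G-\{e,f\}$ having three components with $e$ internal to one pair and $f$ to another); this follows because in a connected graph a minimal set of edges whose removal disconnects $G$ is a \emph{bond}, so $G-\{e,f\}$ has exactly two components and both edges cross between them. Everything else is a short parity argument on the closed walk $C$.
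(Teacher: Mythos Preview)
Your proof is correct and follows essentially the same approach as the paper: pick $e\in S\cap C$ and any other $f\in S$, use that $\{e,f\}$ is a 2-cut, and conclude $f\in C$. The only difference is in the final step, where the paper observes more directly that $e$ is a bridge in $G-f$ and hence lies in no circuit of $G-f$, forcing $C\not\subset G-f$; your parity/crossing argument reaches the same conclusion but is a bit more work than necessary.
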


\begin{proof}
Assume that $S\cap C \neq \emptyset$ and let $e\in S\cap C$ be some edge in 
the intersection. Let $f \in S$ be any other edge in $S$. We need to show 
that $f \in C$. 
 
Since $f \in S$, we know that $\{e,f\}$ is a 2-cut in $G$, hence $e$ is a 
bridge in $G-f$, so $e$ does not participate in any circuit in $G-f$. So 
$C \not\subset (G-f)$, therefore $f \in C$.
\end{proof}

Let $n_{[\epsilon]}:=|S_{[\epsilon]}|$ denote the cardinality of the two-cut 
block $S_{[\epsilon]}$. To finish the construct $\calM(G)$, we need to compute 
the numbers $n_{[\epsilon]}$. Let $\calB=\{C_1,...,C_r\}$ be a 
cancellation-free circuit basis of $\calF(G)$ chosen as in 
Remark \ref{rmk.GoodBasis}. For each $S_{[\epsilon]}$ there is some circuit 
$C_i\in \calB$ for which $S_{[\epsilon]} \subset C_i$. Each $C_i\in \calB$ 
corresponds to a
codimension one face of $V(G)$, denoted $F_{C_i}$. For each $C_i \in \calB$, 
the edges of $C_i$ can be expressed as a disjoint union of the sets 
$S_{[\epsilon]}$, where $[\epsilon]$ runs over all edge directions of the 
Voronoi cell which {\em do not} participate in the face $F_{C_i}$,
as in Theorem~\ref{thm.3conn}. Hence, for $i=1,\dots,r$ one can write 
the linear equations 
\begin{equation}
\label{eq.CycLength}
\sum_{[\epsilon]\notin F_{C_i}}n_{[\epsilon]}= (C_i,C_i),
\end{equation}
where the notation $[\epsilon] \notin F_{C_i}$ means that no edge parallel 
to $\epsilon$ participates in the face $F_{C_i}$. In addition we have a 
similar linear equation for each pair of circuits $C_i, C_j \in \calB$, 
as the number of edges in $C_i\cap C_j$ is given by the absolute value of 
their pairing $|(C_i, C_j)|$, as $\calB$ is cancellation-free.
\begin{equation}
\label{eq.CycIntersect}
\sum_{[\epsilon]\notin F_{C_i},F_{C_j}}n_{[\epsilon]}= |(C_i,C_j)|.
\end{equation}
Note that the resulting system of linear equations always has a positive 
integer solution, namely, the numbers $n_{[\epsilon]}$ determined by the 
graph $G$.

\begin{proposition}
The system~\eqref{eq.CycLength},~\eqref{eq.CycIntersect} has a unique 
positive integer solution $\{n_{[\epsilon]}\}$.
\end{proposition}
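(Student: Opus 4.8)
The plan is to show that the system \eqref{eq.CycLength}, \eqref{eq.CycIntersect} already forces each individual unknown $n_{[\epsilon]}$ to take its ``correct'' value, the one coming from $G$. Since we already know a positive integer solution exists (the one given by $G$), it suffices to prove uniqueness among all real solutions, or equivalently that the only solution to the associated homogeneous system with the $n_{[\epsilon]}$ allowed to be arbitrary reals is the zero solution --- and in fact we do not even need to go to the homogeneous system, it is cleaner to argue directly that the inhomogeneous system pins down each $n_{[\epsilon]}$. First I would fix the cancellation-free circuit basis $\calB=\{C_1,\dots,C_r\}$ and recall, via Theorem~\ref{thm.3conn}-style reasoning and Corollary~\ref{cor.UnionOfCircuits}, exactly which parallel classes $[\epsilon]$ fail to lie in a given face $F_{C_i}$: namely $[\epsilon]\notin F_{C_i}$ precisely when the 2-cut block $S_{[\epsilon]}$ is contained in the circuit $C_i$ (this is where Lemma~\ref{lem.2CBCircuit} enters — a 2-cut block meets $C_i$ either trivially or totally). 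So the left-hand side of \eqref{eq.CycLength} is $\sum_{S_{[\epsilon]}\subset C_i} n_{[\epsilon]}$, which is literally a partition of the edge set of $C_i$ into 2-cut blocks, and similarly \eqref{eq.CycIntersect} sums $n_{[\epsilon]}$ over those blocks contained in $C_i\cap C_j$.

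Next I would isolate a single parallel class $[\epsilon_0]$ and show how to recover $n_{[\epsilon_0]}$ from the equations. Pick a basis circuit $C_i$ with $S_{[\epsilon_0]}\subset C_i$. The idea is that the other 2-cut blocks inside $C_i$ can each be ``detected'' by a second basis circuit that separates them from $S_{[\epsilon_0]}$. Concretely: if $S_{[\epsilon]}$ and $S_{[\epsilon_0]}$ are two distinct 2-cut blocks both inside $C_i$, then by Proposition~\ref{prop.parallel} they correspond to distinct maximal strongly orientable subgraphs $G-S_{[\epsilon]}$ and $G-S_{[\epsilon_0]}$, hence to non-parallel edge-directions of $V(G)$; I want to produce a basis circuit $C_j$ containing $S_{[\epsilon_0]}$ but disjoint from $S_{[\epsilon]}$ (or the reverse). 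Equivalently, in $\calM(G)$, distinct elements of a circuit lie in different parallel classes of the matroid only up to their 2-cut block, and the circuit basis, being cancellation-free and chosen as in Remark~\ref{rmk.GoodBasis}, is rich enough to ``see'' every 2-cut block: because the basis circuits span $\calF(G)\otimes\bbR$, no two distinct 2-cut blocks can be contained in exactly the same subfamily $\{C_i\in\calB : S\subset C_i\}$. Once we know the ``incidence pattern'' of each 2-cut block against $\calB$ is distinct, the coefficient matrix of the system --- whose rows are indexed by the equations \eqref{eq.CycLength}, \eqref{eq.CycIntersect} and whose columns are indexed by the blocks, with a $1$ in position $(\text{equation for }C_i, [\epsilon])$ iff $S_{[\epsilon]}\subset C_i$, and analogously for the pair-equations --- can be shown to have full column rank, giving uniqueness.

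The cleanest way to nail full column rank, and the step I expect to be the main obstacle, is a combinatorial inversion argument rather than a bare rank computation: one wants an explicit way to solve for $n_{[\epsilon]}$, something like $n_{[\epsilon]} = (C_i,C_i) - \sum_{j}(\text{correction terms})$ where the corrections are built from the pair-intersections $|(C_i,C_j)|$ and are themselves already determined. I would set this up by induction on the number of 2-cut blocks inside a fixed $C_i$: the blocks of $C_i$ that are "visible from outside," i.e.\ also contained in some other $C_j$, get their sizes via the intersection equations \eqref{eq.CycIntersect} (reduced to a smaller instance on $C_i\cap C_j$), and the remaining block --- there can be at most one block of $C_i$ contained in no other basis circuit, since otherwise two distinct blocks would have identical incidence with $\calB$, contradicting that $\calB$ spans and hence distinguishes the cycle spaces via Proposition~\ref{prop.parallel} --- is then forced by \eqref{eq.CycLength} as $(C_i,C_i)$ minus the already-known sizes. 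Carrying out this induction carefully, and in particular verifying the "at most one invisible block" claim from the spanning property of $\calB$ together with Corollary~\ref{cor.UnionOfCircuits}, is the technical heart; once it is in place, existence (already granted) plus this reconstruction formula gives the unique positive integer solution, and positivity and integrality are automatic because the reconstructed values coincide with the $n_{[\epsilon]}$ coming from $G$.
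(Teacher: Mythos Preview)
Your approach is genuinely different from the paper's, and it has a real gap.

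The paper does \emph{not} argue that the linear system has full column rank over~$\bbR$. Instead, given a second positive integer solution $\{n'_{[\epsilon]}\}$, it builds a new graph $G'$ from $G$ by subdividing or contracting edges inside each 2-cut block so that the block sizes become $n'_{[\epsilon]}$; checks that the resulting Gram matrix (in the cancellation-free basis, transported to $G'$) is unchanged; and then invokes Greene's Theorem~\ref{thm.InducedIso} to obtain a 2-isomorphism $G\to G'$, which forces $n_{[\epsilon]}=n'_{[\epsilon]}$. Positivity and integrality are used essentially: they are what allow one to realize $\{n'_{[\epsilon]}\}$ as block sizes of an actual graph.

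Your plan is to prove the stronger statement that the real linear system has a unique solution, via an induction that peels off one block of each $C_i$ at a time. The step ``the visible blocks get their sizes via the intersection equations, reduced to a smaller instance on $C_i\cap C_j$'' is where this breaks down. The equation \eqref{eq.CycIntersect} for a pair $\{i,j\}$ only gives you the \emph{total} size of $C_i\cap C_j$; if several blocks sit inside $C_i\cap C_j$ you cannot isolate them without triple-intersection data, and the system contains no equations for $|C_i\cap C_j\cap C_k|$. So the recursion has nowhere to go. Concretely, nothing in your argument rules out the situation where the blocks inside $C_1,\dots,C_r$ realize \emph{every} nonempty subset of $\{1,\dots,r\}$ as an incidence pattern; in genus $3$ that would mean $7$ unknowns against $3+\binom{3}{2}=6$ equations, and the seven corresponding $(0,1)$-columns satisfy the inclusion--exclusion relation
\[
[\{1,2,3\}]-[\{1,2\}]-[\{1,3\}]-[\{2,3\}]+[\{1\}]+[\{2\}]+[\{3\}]=0,
\]
so the coefficient matrix is certainly not of full column rank in that combinatorial model. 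You would need a separate argument (and it is not the one you sketched) to show that this configuration cannot arise from an actual cancellation-free circuit basis of a graph.

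A smaller point: your claim that distinct 2-cut blocks have distinct incidence patterns with $\calB$ is correct, but the reason is not merely that $\calB$ spans. One needs the cancellation-free property: if $e\in S$ and $e'\in S'$ lie in exactly the same basis circuits, then cancellation-freeness forces the signs of $e$ and $e'$ in each $C_i$ to agree up to one global sign, so $\phi_e=\pm\phi_{e'}$ as functionals on $\calF(G)$, whence $\ker\phi_e=\ker\phi_{e'}$ and, by the argument of Proposition~\ref{prop.parallel}, $S=S'$. With an arbitrary circuit basis this can fail.
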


\begin{proof}
Assume that $\{n_\epsilon\}$ is the solution given by the sizes of 
2-cut-blocks in $G$, and $\{n'_{[\epsilon]}\}$ is a different positive integer 
solution. Our strategy is to construct a corresponding graph $G'$ 
with 2-cut blocks $S'_{[\epsilon]}$ of sizes $n'_{[\epsilon]}$, and show that 
$G$ and $G'$ are 2-isomorphic. 

The edges of $G$ are paritioned by the sets $S_{[\epsilon]}$, with 
$|S_{[\epsilon]}|=n_{[\epsilon]}$. Assume that for some 
$\epsilon$, $n'_{[\epsilon]}>n_{[\epsilon]}$. Choose an arbitrary edge of 
$S_{[\epsilon]}$ and split it into $n'_{[\epsilon]}-n_{[\epsilon]}+1$ edges
by creating $n'_{[\epsilon]}-n_{[\epsilon]}$ degree two vertices on it, 
preserving the edge orientation. Call the 
resulting graph $G^*$, and the changed 2-cut block $S^*_{[\epsilon]}$. 
First, note that in $G^*$, $S^*_{[\epsilon]}$ is a in fact 2-cut block, that 
is, any two of its edges form a 2-cut. Furthermore, 
$|S^*_{[\epsilon]}|=n'_{[\epsilon]}$, and all other 2-cut blocks are unchanged. 
In addition, the circuits of $G$ are in bijection with the 
circuits of $G^*$, in the obvious way, and 
linear independence of circuits is preserved. (Informally speaking, the edge 
splitting operation does not change the cycle structure of $G$ except for the 
lengths of cycles.)

On the other hand, if for some $\epsilon$, $n'_{[\epsilon]}<n_{[\epsilon]}$, 
then choose $n_{[\epsilon]}-n'_{[\epsilon]}$ arbitrary edges of $S_{[\epsilon]}$
and contract them. Again call the resulting graph $G^*$, and note that the 
newly created $S^*_{[\epsilon]}$ is a 2-cut block in $G^*$ of cardinality 
$n'_{[\epsilon]}$, while all other 2-cut blocks are unchanged. 

This operation also preserves the circuits of $G$: first note 
that any circuit in $G$ which intersects with $S_{[\epsilon]}$ contains it,
and $n'_{[\epsilon]}$ is still a positive integer, so circuits are never 
contracted to nothing. Furthermore, circuits remain circuits: 
only edges that participate in 2-cuts are ever contracted and hence, if
a closed walk didn't repeat vertices in $G$, it still does not do so in $G^*$.
Finally, the edge contractions do not create new circuits: if a closed walk 
in $G$ repeats a vertex, it will still do so in $G^*$.
In summary, the circuits of $G$ are in bijection with the circuits 
of $G^*$. Linear independence of circuits is also preserved.

Now carry out this process for all $[\epsilon]$ to create a new graph $G'$ 
with 2-cut blocks $S'_{[\epsilon]}$ of sizes $n'_{[\epsilon]}$. We claim that 
$\calF(G) \cong \calF(G')$. Recall that $\{C_1,\dots,C_r\}$ is a circuit 
basis for $\calF(G)$, and call the gram matrix corresponding to this 
circuit basis $M$. Let $\{C'_1,\dots,C'_r\}$ be the circuits in $G'$ created 
from $\{C_1,\dots,C_r\}$, respectively. Since circuits and linear independence 
were preserved, these form a circuit basis for $\calF(G')$; let us
call the corresponding Gram matrix $M'$. 

Since the $C_i$ form a cancellation-free basis of $\calF(G)$, the 
entries of the absolute value $|M|$ are the sizes of intersections 
$|C_i \cap C_j|$, with $i,j=1...r$. The basis $\{C_1',...,C_r'\}$ is also 
cancellation-free: the characterization of Remark \ref{rmk.GoodBasis} shows 
that the edge splittings and contractions do not change this property. 
Since the $\{n_{\epsilon}\}$ and $\{n'_{[\epsilon]}\}$ are both 
solutions to the system of equations~(\ref{eq.CycLength}) and 
(\ref{eq.CycIntersect}), we deduce that $|M|=|M'|$.

Furthermore, the sign of the $(i,j)$ entry of $M$ depends on whether $C_i$ 
and $C_j$ are oriented compatibly or opposite (where opposite means that 
the orientation of $C_i$ is compatible with that of $-C_j$). This is 
unchanged by edge splittings and contractions, hence the sign of each entry 
is the same in $M$ and $M'$. So $M=M'$ and $\calF(G) \cong \calF(G')$. Thus, 
by Theorem \ref{thm.InducedIso}, this lifts to an isomorphism 
$\mathbb Z^{E(G)} \to \mathbb Z^{E(G')}$, which sends each edge of $G$ to a 
signed edge of $G'$, creating edge bijections within the 2-cut blocks. 
So $n_\epsilon=n'_\epsilon$ for all $\epsilon$ as needed.
\end{proof}

\begin{theorem}
\label{thm.2conn}
The graphic matroid $\calM(G)$ of a 2-connected graph $G$ can be computed 
explicitly from the lattice of integer flows $\calF(G)$ by the following 
algorithm:
\begin{enumerate} 
\item 
List the one-codimensional faces of the Voronoi cell $V(G)$ of $\calF(G)$, 
and use this to choose a good circuit basis $\{C_1,...,C_r\}$ for $\calF(G)$ 
as explained in Remark~\ref{rmk.GoodBasis}.
\item 
List the edges $\{\epsilon\}$ of $V(G)$, and group them into parallel 
classes $\{[\epsilon]\}$. For each circuit $C$, list which edge directions 
participate in the face $F_{C}$.
\item 
For each basis circuit $C_i, i=1...r$, write the equation 
$$
\sum_{[\epsilon]\notin F_{C_i}} n_{[\epsilon]}=(C_i,C_i),
$$
and for each pair of basis circuits 
$\{\{F_{C_i},F_{C_j}\},i,j=1...r, i \neq j\}$ write the equation 
$$
\sum_{[\epsilon]\notin F_{C_i}, F_{C_j}} n_{[\epsilon]}=(C_i,C_j).
$$
\item 
Solve the system of linear equations, to find the unique positive integer 
solution $\{n_{[\epsilon]}\}$.
\item 
The ground set of $\calM(G)$ is the edge set $E(G)$, which can be written 
as a disjoint union of the sets $\{S_{[\epsilon]}\}$ whose sizes are given 
by the solution $\{n_{[\epsilon]}\}$.
\item 
The edge $e$ belongs to a circuit $C$ if and only if no member of the 
corresponding edge parallel class $[\epsilon]$ in $V(G)$ belongs to the 
face $F_C$. 
\end{enumerate}
\end{theorem}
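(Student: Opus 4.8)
The plan is to show that each step of the algorithm uses only data readable off the Gram matrix of $\calF(G)$ (equivalently, off the face lattice of the Voronoi cell $V(G)$), and that steps (5)--(6) together output exactly the circuit--edge incidence matrix that was declared to be $\calM(G)$ in Section~\ref{sec.prelim}. All the real content is already in Proposition~\ref{prop.Voronoi}, Theorem~\ref{thm.Amini}, Proposition~\ref{prop.parallel}, Lemmas~\ref{lem.EqRel}--\ref{lem.2CBCircuit}, and the uniqueness Proposition just proved, so the argument is mostly an assembly; the one substantive point is the dictionary between face inclusions in $V(G)$ and edge-containment among circuits, and I would isolate that as a lemma.

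First I would justify step (1): the codimension-one faces of $V(G)$ detect the circuit elements of $\calF(G)$ (Proposition~\ref{prop.Voronoi}), and Remark~\ref{rmk.GoodBasis} shows how to pick a cancellation-free circuit basis $\{C_1,\dots,C_r\}$ among them using only which faces $F_{\pm C_i}$ meet. For step (2), Proposition~\ref{prop.parallel} puts the parallel classes of edges (one-dimensional faces) of $V(G)$ in bijection with the maximal strongly orientable proper subgraphs of $G$, and Lemma~\ref{lem.2CB} identifies those with the subgraphs $H=G-S$ for $S$ a 2-cut block; write $S_{[\epsilon]}$ for the 2-cut block of the class $[\epsilon]$, so that $E(G)=\bigsqcup_{[\epsilon]}S_{[\epsilon]}$ by Lemma~\ref{lem.EqRel}. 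The dictionary I would then prove is: \emph{for a circuit $C$ and an edge direction $[\epsilon]$, some member of $[\epsilon]$ lies in the face $F_C$ if and only if $S_{[\epsilon]}\cap C=\emptyset$} (equivalently, by Lemma~\ref{lem.2CBCircuit}, if and only if $S_{[\epsilon]}\not\subseteq C$). The ``only if'' direction is a direct translation through Amini's order-isomorphism $\calV(G)\cong\calS(G)$ of Theorem~\ref{thm.Amini}: a one-face $\epsilon\subseteq F_C$ corresponds to $(G-S_{[\epsilon]},\omega')\preceq(C,\omega_C)$, which by the definition of $\preceq$ forces $C$ to be a subgraph of $G-S_{[\epsilon]}$; passing to the whole parallel class is harmless by Proposition~\ref{prop.parallel}. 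For the ``if'' direction one needs the small fact that any cyclic orientation of a circuit $C$ inside a graph whose components are 2-connected extends to a strong orientation of that graph (provable by a closed ear decomposition based at $C$), which then produces a member of $[\epsilon]$ inside $F_C$. This dictionary is the part I expect to require the most care, since it is where the direction of the poset order and the distinction between a single edge and a parallel class both matter.

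With the dictionary in hand, steps (3)--(4) are immediate. By Lemma~\ref{lem.2CBCircuit} each 2-cut block is contained in or disjoint from every circuit, so $C_i=\bigsqcup_{S_{[\epsilon]}\subseteq C_i}S_{[\epsilon]}$ and $C_i\cap C_j=\bigsqcup_{S_{[\epsilon]}\subseteq C_i\cap C_j}S_{[\epsilon]}$; the dictionary rewrites the index sets as $\{[\epsilon]\notin F_{C_i}\}$ and $\{[\epsilon]\notin F_{C_i},F_{C_j}\}$, and since the basis is cancellation-free the cardinalities are $(C_i,C_i)$ and $|(C_i,C_j)|$, giving exactly equations~\eqref{eq.CycLength} and~\eqref{eq.CycIntersect}. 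The true block sizes $|S_{[\epsilon]}|$ are a positive integer solution, and by the preceding Proposition it is the unique one, so step (4) recovers $n_{[\epsilon]}=|S_{[\epsilon]}|$.

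Finally, for steps (5)--(6): the ground set $E(G)=\bigsqcup_{[\epsilon]}S_{[\epsilon]}$ is now known with its cardinalities, and the circuits of $G$ are in bijection with the parallel classes of codimension-one faces of $V(G)$ (Proposition~\ref{prop.parallel}: a genus-one strongly orientable subgraph is a circuit with its two cyclic orientations), exactly as in Theorem~\ref{thm.3conn}(2). The incidence is supplied by the dictionary together with Lemma~\ref{lem.2CBCircuit}: an edge $e\in S_{[\epsilon]}$ lies in a circuit $C$ iff $S_{[\epsilon]}\subseteq C$ iff no member of $[\epsilon]$ lies in $F_C$, so step (6) fills in the $(0,1)$ circuit--edge matrix correctly; since the edges within each $S_{[\epsilon]}$ are indistinguishable for this matrix, the output is a well-defined matroid and equals $\calM(G)$. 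This completes the plan.
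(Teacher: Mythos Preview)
Your proposal is correct and follows essentially the same route as the paper: the theorem is a synthesis of the preceding results (Proposition~\ref{prop.Voronoi}, Theorem~\ref{thm.Amini}, Proposition~\ref{prop.parallel}, Lemmas~\ref{lem.EqRel}--\ref{lem.2CBCircuit}, and the uniqueness Proposition), and the paper in fact states it without a separate proof block, treating it as already justified by that build-up. Your write-up is, if anything, more explicit than the paper's: you isolate the ``dictionary'' ($[\epsilon]$ meets $F_C$ iff $S_{[\epsilon]}\cap C=\emptyset$) and supply the ear-decomposition argument for its ``if'' direction, a point the paper leaves implicit in the passage from Amini's poset isomorphism to the incidence rule of step~(6).
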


\begin{example}
\label{ex.3D11}
As an example, let us carry out the algorithm for the graph of 
Example~\ref{ex.3D1}, and construct the graphic matroid $\calM(G)$
from $\calF(G)$. 
\begin{enumerate}
\item 
We start with the Gram matrix of $\calF(G)$ given by
$$
\begin{pmatrix}
3 & 1 & 2  \\
1 & 3 & 0  \\
2 & 0 & 4  
\end{pmatrix}.
$$
(This is the Gram matrix with respect to the circuit basis
in Example~\ref{ex.3D1}.) We compute the Voronoi cell $V(G)$ of $\calF(G)$ 
which is the rhombic dodecahedron shown in Figure~\ref{fig.rhombicdodecahedron}.
It has 12 faces. Of these we choose three pairwise intersecting ones, 
thus forming a cancellation-free basis: let $F_{C_1}$ be the top right 
(white) face of the dodecahedron shown in Figure~\ref{fig.rhombicdodecahedron},
$F_{C_2}$ the top (light blue) face, and $F_{C_3}$ the front (light purple) 
face.
\item 
The rhombic dodecahedron has 24 edges belonging to four parallel classes 
(of 6 edges each). Let $\epsilon_1$ be the edge between the two left side 
faces (dark blue and dark purple), $\epsilon_2$ the edge between the two 
right side faces (white and orange), $\epsilon_3$ be the edge between the 
front and top left faces (light purple and dark blue), and $\epsilon_4$ the 
edge between the front and top right faces (light purple and white).

\noindent
The parallel classes $\{[\epsilon_2],[\epsilon_4]\}$ appear in $F_{C_1}$, 
$\{[\epsilon_1],[\epsilon_2]\}$ appear in $F_{C_2}$, and 
$\{[\epsilon_3],[\epsilon_4]\}$ appear in $F_{C_3}$.
\item 
$n_1+n_3=3, \quad n_3+n_4=3, \quad n_1+n_2=4, \quad n_3=1, \quad n_1=2, 
\quad 0=0$.
\item 
$n_1=2, \quad n_2=2, \quad n_3=1, \quad n_4=2$.
\item 
There are 7 edges partitioned into sets $E(G)=\bigcup_{i=1}^4 S_i$ with 
$|S_i|=n_i$.
\item 
There are six (unoriented) circuits corresponding to parallel pairs of faces 
of $V(G)$. These are: 
$C_1$, $C_2$, $C_3$, $C_4=C_3+C_2-C_1$, $C_5=C_1-C_2$, and $C_6=C_3-C_1$. 
They can be expressed in terms of the $S_i$ as follows: 
$C_1=S_{[\epsilon_1]}\cup S_{[\epsilon_3]}$, 
$C_2=S_{[\epsilon_3]}\cup S_{[\epsilon_4]}$,
$C_3=S_{[\epsilon_1]}\cup S_{[\epsilon_2]}$,
$C_4=S_{[\epsilon_2]}\cup S_{[\epsilon_4]}$,
$C_5=S_{[\epsilon_1]}\cup S_{[\epsilon_4]}$, and
$C_6=S_{[\epsilon_2]}\cup S_{[\epsilon_3]}$.
In other words, $\calM(G)$ is given by the $(0,1)$-matrix
\begin{equation}
\label{eq.MGmatrix}
\begin{bmatrix}
1 & 1 & 0 & 0 & 1 & 0 & 0 \\
0 & 0 & 0 & 0 & 1 & 1 & 1 \\
1 & 1 & 1 & 1 & 0 & 0 & 0 \\
0 & 0 & 1 & 1 & 0 & 1 & 1 \\
1 & 1 & 0 & 0 & 0 & 1 & 1 \\
0 & 0 & 1 & 1 & 1 & 0 & 0 \\
\end{bmatrix}
\end{equation}
\end{enumerate}
In Figure~\ref{fig.3triangles}, we see a representative $G$ of the 
2-isomorphism class of graphs given by $\calF(G)$. In this representative, 
$S_{[\epsilon_1]}=\{e_3,e_4\}$, $S_{[\epsilon_2]}=\{e_1,e_2\}$,
$S_{[\epsilon_3]}=\{e_7\}$, and $S_{[\epsilon_4]}=\{e_5,e_6\}$. 
\end{example}

\begin{remark}
\label{rmk.SpanTree}
In \cite{SuWagner} Su and Wagner prove that the lattice of integer flows 
$\calF(M)$ of a regular matroid $M$ determines the matroid up to co-loops. 
Their proof is almost constructive, but requires the input of a 
``fundamental basis of $\calF(M)$ coordinatized by a basis of $M$''. In the 
context of graphic matroids (which form a sub-class of regular matroids) 
this means the input of a {\em spanning-tree basis}, that is, the set of 
{\em fundamental circuits} corresponding to some spanning tree of $G$, as 
described in Section~\ref{sec.prelim}. As a final remark we show how to 
choose a spanning-tree basis for $\calF(G)$, making the 
proof in \cite{SuWagner} fully constructive for graphic matroids.

To choose a spanning-tree basis, the first step is the same as before: 
list the one codimension faces and parallel classes of edges of $V(G)$. 
Then construct a 0-1 matrix, the rows of which are indexed by the 
(unoriented) circuits of $G$ and the columns by the 2-cut blocks 
$S_{[\epsilon]}$ as follows. Place a $1$ in the field $(C, S_{[\epsilon]})$ if 
and only if $S_{[\epsilon]}\subset C$, which is the case if and only if 
$[\epsilon] \notin F_C$. 
 
A spanning-tree basis of $G$ can be found by the following greedy algorithm. 
To find a spanning tree, one needs to delete edges from a graph until no 
circuits remain, but so that the graph stays connected. Consider the first 
row of the matrix, indexed by the circuit $C_1$. Find the first $1$ in the 
row $C_1$, without loss of generality assume that $S_1 \subset C_1$. 
Imagine deleting (we say imagine, as $G$ is not known to us) one edge from 
$S_1$, thereby breaking the circuit $C_1$. To mark this change, turn all the 
$1$'s in the column of $S_1$ red: all the circuits which contained $S_1$ are 
now broken.
 
Now find the first unbroken circuit (i.e. one which has no red $1$ in its 
row). Delete (in the ``imaginary'' graph $G$) one edge of the first 
2-cut block which participates in it, and turn all the $1$'s in that column 
red. Continue in this manner until no circuits are left intact, i.e. until 
every row has a red $1$ in it.
 
The process will clearly terminate. At the end we know that there are no 
circuits left in $G$, although we don't know $G$. Also, $G$ remains 
connected as only edges which participate in a circuit are ever deleted. 
Hence, what remains is a spanning tree of $G$. The elements of the 
corresponding spanning-tree basis of $\calF(G)$ are those circuits which 
have {\em exactly one edge not in the spanning tree}, i.e., the circuits 
which have {\em only one} red $1$ in their row at the end of the algorithm.

As an illustration, applying the above algorithm to the 
lattice of integer flows in Example~\ref{ex.3D11}, we obtain the matrix
\begin{equation*}
\begin{bmatrix}
1 & 0 & 1 & 0 \\
0 & 0 & 1 & 1 \\
1 & 1 & 0 & 0 \\
0 & 1 & 0 & 1 \\
1 & 0 & 0 & 1 \\
0 & 1 & 1 & 0 \\
\end{bmatrix}
\end{equation*}
This is the same matrix as \eqref{eq.MGmatrix}, except with only one column 
for each two-cut block $S_{\epsilon}$, and we don't need to know what the 
sizes of the blocks are. Carrying out the algorithm leads to the following 
matrix (marking red 1's by $*$ for black and white print):
\begin{equation*}
\begin{bmatrix}
\red{1*} & 0 & \red{1*} & 0 \\
0 & 0 & \red{1*} & 1 \\
\red{1*} & \red{1*} & 0 & 0 \\
0 & \red{1*} & 0 & 1 \\
\red{1*} & 0 & 0 & 1 \\
0 & \red{1*} & \red{1*} & 0 \\
\end{bmatrix}
\end{equation*}
From this we read off the spanning tree basis $\{C_2, C_4, C_5\}$. This basis 
corresponds to the spanning tree $\{e_2, e_4, e_5, e_6\}$ with the notation of 
Figure~\ref{fig.3triangles}, though the algorithm does not output this 
information.
\end{remark}

\subsection*{Acknowledgment}
The authors wish to thank Dror Bar-Natan, Josh Greene, Tony Licata, 
and Brendan McKay for useful conversations. S.G. was supported in part by
in part by the National Science Foundation Grant DMS-14-06419.

\bibliographystyle{hamsalpha}
\bibliography{biblio}
\end{document}